\newcommand{\mca}{\mathcal}
\newcommand{\poi}{\mathbb{N}}
\newtheorem{theorem}{Theorem}[section]
\newtheorem{lemma}[theorem]{Lemma}
\newcommand{\leqnomode}{\tagsleft@true}
\newcommand{\reqnomode}{\tagsleft@false}
\def\dd{\hbox{-}}
\DeclareMathOperator{\tw}{tw}
\newcounter{tbox}
\newcommand{\sta}[1]{\medskip\medskip\refstepcounter{tbox}\noindent{\parbox{\textwidth}{(\thetbox) \emph{#1}}}\vspace*{0.3cm}}
\newcommand{\mylongtitle}[1]{%
  \ifodd\value{page}%
    \protect\parbox{0.97\linewidth}{#1}\hfill%
  \else%
    \hfill\protect\parbox{0.97\linewidth}{#1}%
  \fi%
}
\newcommand{\mf}{\mathfrak}
\tikzset{snake it/.style={decorate, decoration=snake}}
\def\dd{\hbox{-}}
\newcommand{\otherlabel}[2]{\protected@edef\@currentlabel{#2}\label{#1}}
\mathchardef\mh="2D
\title[Induced subgraphs and tree decompositions XII.]{Induced subgraphs and tree decompositions\\
XII. Grid theorem for pinched graphs}
\author{Bogdan Alecu$^{\ast \ast \mathparagraph}$}
\author{Maria Chudnovsky$^{\ast \amalg}$}
\author{Sepehr Hajebi$^{\mathsection}$}
\author{Sophie Spirkl$^{\mathsection \parallel}$}
\thanks{$^{\ast}$ Princeton University, Princeton, NJ, USA}
\thanks{$^{**}$ School of Computing, University of Leeds, Leeds, UK}
\thanks{$^{\mathsection}$ Department of Combinatorics and Optimization, University of Waterloo, Waterloo, Ontario, Canada}
\thanks{$^{\amalg}$ Supported by NSF-EPSRC Grant DMS-2120644 and by AFOSR grant FA9550-22-1-0083.}
\thanks{$^{\mathparagraph}$ Supported by DMS-EPSRC Grant EP/V002813/1.}
\thanks{$^{\parallel}$ We acknowledge the support of the Natural Sciences and Engineering Research Council of Canada (NSERC), [funding reference number RGPIN-2020-03912].
Cette recherche a \' et\' e financ\' ee par le Conseil de recherches en sciences naturelles et en g\' enie du Canada (CRSNG), [num\' ero de r\' ef\' erence RGPIN-2020-03912]. This project was funded in part by the Government of Ontario.}
\date {\today}
\begin{document}

\maketitle
\begin{abstract}
Given $c\in \poi$, we say a graph $G$ is \textit{$c$-pinched} if $G$ does not contain an induced subgraph consisting of $c$ cycles, all going through a single common vertex and otherwise pairwise disjoint and with no edges between them. What can be said about the structure of $c$-pinched graphs?

For instance, $1$-pinched graphs are exactly graphs of treewidth $1$. However, bounded treewidth for $c>1$ is immediately seen to be a false hope because complete graphs, complete bipartite graphs, subdivided walls and line graphs of subdivided walls are all examples of $2$-pinched graphs with arbitrarily large treewidth. There is even a fifth obstruction for larger values of $c$, discovered by Pohoata and later independently by Davies, consisting of $3$-pinched graphs with unbounded treewidth and no large induced subgraph isomorphic to any of the first four obstructions.

We fuse the above five examples into a grid-type theorem fully describing the unavoidable induced subgraphs of pinched graphs with large treewidth. More precisely, we prove that for \textit{every} $c\in \poi$, a $c$-pinched graph $G$ has large treewidth if and only if $G$ contains one of the following as an induced subgraph: a large complete graph, a large complete bipartite graph, a subdivision of a large wall, the line graph of a subdivision of a large wall, or a large graph from the Pohoata-Davies construction. Our main result also generalizes to an extension of pinched graphs where the lengths of excluded cycles are lower-bounded.
\end{abstract}

\section{Introduction}\label{sec:intro}

\subsection{Background}

The set of all positive integers is denoted by $\poi$. Graphs in this paper have finite vertex sets, no loops and no parallel edges. Let $G$ be a graph. For $X \subseteq V(G)$, we denote by $G[X]$ the subgraph of $G$ induced by $X$, and by $G \setminus X$ the induced subgraph of $G$ obtained by removing $X$. We use induced subgraphs and their vertex sets interchangeably. For graphs $G$ and $H$, we say $G$ \emph{contains} $H$ if $G$ has an induced subgraph isomorphic to $H$, and we say $G$ is \emph{$H$-free} if $G$ does not contain $H$. A class of graphs is \textit{hereditary} if it is closed under isomorphism and taking induced subgraphs.

The \textit{treewidth} of a graph $G$ (denoted by $\tw(G)$) is the smallest $w\in \poi$ for which there exists a tree $T$ as well as an assignment $(T_v:v\in V(G))$ of non-empty subtrees of $T$ to the vertices of $G$ with the following specifications.
\begin{enumerate}[(T1), leftmargin=15mm, rightmargin=7mm]
\item For every edge $uv\in V(G)$, $T_u$ and $T_v$ share at least one vertex.

\item For every $x\in V(T)$, there are at most $w+1$ vertices $v\in V(G)$ for which $x\in V(T_v)$.
\end{enumerate}

As one of the most extensively studied graph invariants, the enduring interest in treewidth is partly explained by its role in the development of Robertson and Seymour's graph minors project, as well as the vast range of nice structural \cite{RS-GMV} and algorithmic \cite{Bodlaender1988DynamicTreewidth} properties of graphs of small treewidth.

Graphs of large treewidth have also been a central topic of research for several decades. Usually, it is most desirable to certify large treewidth in a graph $G$ by means of a well-understood ``obstruction'' which still has relatively large treewidth, and which lies in $G$ under a certain containment relation. The cornerstone result in this category is the so-called \textit{Grid Theorem} of Robertson and Seymour \cite{RS-GMV}, Theorem~\ref{wallminor} below, which says that under two of the most studied graph containment relations, namely the graph minor relation and the subgraph relation, the only obstructions to bounded treewidth are the ``basic'' ones: the $t$-by-$t$ square grid for minors, and subdivisions of the $t$-by-$t$ hexagonal grid for subgraphs. The $t$-by-$t$ hexagonal grid is also known as the \textit{$t$-by-$t$ wall}, denoted by $W_{t\times t}$ (see Figure~\ref{fig:Grid+Wall}, and also \cite{wallpaper} for full definitions).

\begin{theorem}[Robertson and Seymour \cite{RS-GMV}]\label{wallminor}
For every $t\in \poi$,
every graph of sufficiently large
treewidth contains the $t$-by-$t$ square grid as a minor, or equivalently, a subdivision of $W_{t\times t}$ as a subgraph.
\end{theorem}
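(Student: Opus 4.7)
The plan is to reduce the theorem to a ``dense connectivity'' statement via the well-known duality between treewidth and brambles. Recall that a \emph{bramble} in $G$ is a collection $\mca{B}$ of pairwise touching connected subgraphs (where two subgraphs \emph{touch} if they share a vertex or are joined by an edge), and its \emph{order} is the minimum size of a vertex set meeting every member of $\mca{B}$. The Seymour--Thomas theorem states that $\tw(G)\geq k-1$ if and only if $G$ admits a bramble of order at least $k$. So the first step is to choose $k=k(t)$ sufficiently large and convert the hypothesis into a bramble of order $k$ in $G$.

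The second step is to convert this bramble into a \emph{well-linked set}: a subset $S\subseteq V(G)$, of size polynomial in $k$, such that for any two disjoint equicardinal $A,B\subseteq S$ of size at most $|S|/3$, there exist $|A|$ vertex-disjoint $A$\dd$B$ paths in $G$. Starting from a bramble of order $k$, one first produces a haven of the same order, which in turn yields such an $S$ via Menger's theorem together with submodularity of cut functions. Moreover, one can arrange that $S$ remains well-linked (with slightly weaker parameters) after deleting any small set of vertices, which is essential for the iterative construction below.

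The third and main step is to build a subdivision of $W_{t\times t}$ inside $G$ from the well-linked set. Starting from an induced path $P_1$ meeting $S$ in many vertices, one inductively finds disjoint paths $P_2, P_3,\ldots$, each attached to $P_1,\ldots,P_{i-1}$ by pairs of vertex-disjoint ``rung'' paths, while preserving a linear number of rungs at each step. At each stage the well-linkedness of $S$ guarantees that the next row can be routed; if the new row conflicts with existing rungs, a rerouting (``linkage/crossing paths'') argument in the spirit of Diestel--Jensen--Gorbunov--Thomas allows us to reshuffle the partial wall so that only a constant fraction of rows and rungs is lost. After $t$ iterations the surviving structure is a subdivision of $W_{t\times t}$.

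The main obstacle is the quantitative bookkeeping in this third step: each iteration may lose a polynomial factor in the well-linkedness parameter, so $k$ must be chosen enormous compared to $t$ (Robertson--Seymour's original bound is not even primitive recursive; Chekuri--Chuzhoy later gave a polynomial dependence). A secondary subtlety, specific to the subgraph (as opposed to minor) version, is ensuring that the rungs become internally disjoint from the horizontal paths except at their endpoints; this is handled by contracting each horizontal ``branch set'' to an induced path and pushing all redirection onto unused portions of the well-linked set.
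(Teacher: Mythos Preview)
The paper does not prove this theorem: it is simply cited as a classical result of Robertson and Seymour (with reference \cite{RS-GMV}) and used as background motivation in the introduction. There is therefore no ``paper's own proof'' to compare your attempt against.

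As for the sketch itself, the high-level architecture you describe---bramble/treewidth duality, extraction of a well-linked set, and then an iterative path-by-path construction of the wall with rerouting arguments---is in the spirit of several modern expositions (e.g.\ the Diestel--Jensen--Gorbunov--Thomassen proof and later refinements). However, what you have written is a plan, not a proof: step three is where essentially all of the work lies, and phrases like ``a rerouting argument in the spirit of \ldots\ allows us to reshuffle the partial wall so that only a constant fraction of rows and rungs is lost'' hide the entire technical content. In particular, the claim that one can always pass from a bramble of order $k$ to a well-linked set of size polynomial in $k$ via Menger and submodularity is not quite right as stated (one typically gets a well-linked set of order $\Omega(k)$, but the polynomial losses you allude to occur elsewhere), and the statement that $S$ ``remains well-linked after deleting any small set'' needs a precise quantitative formulation to be usable. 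If you intend this as an outline for a full write-up, you would need to supply the actual rerouting lemma and the inductive invariant explicitly.
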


\begin{figure}[t!]
\centering
\includegraphics[scale=0.7]{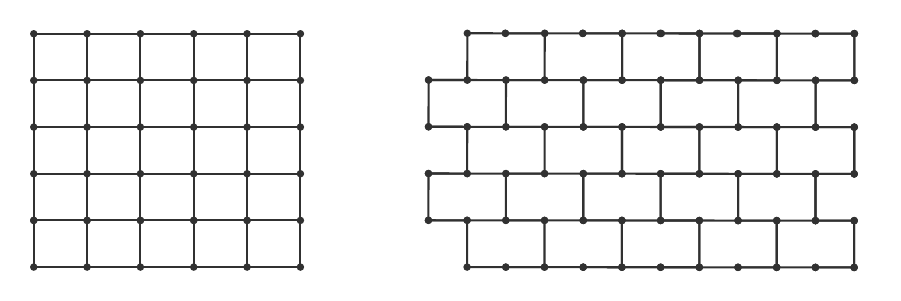}
\caption{The $6$-by-$6$ square grid (left) and the $6$-by-$6$ wall $W_{6\times 6}$ (right).}
\label{fig:Grid+Wall}
\end{figure}

It is therefore tempting to inquire about an analogue of Theorem~\ref{wallminor} for another standard graph containment relation: induced subgraphs. The basic obstructions in this case already suggest that a more involved grid-type theorem is to be expected: complete graphs, complete bipartite graphs, subdivided walls, and line graphs of subdivided walls are all examples of induced-subgraph-minimal graphs with large treewidth. It is convenient to group all these graphs together. Given $t\in \poi$, we say a graph $H$ is a \textit{$t$-basic obstruction} if $H$ is isomorphic to one of the following: the complete graph $K_t$, the complete bipartite graph $K_{t,t}$, a subdivision of $W_{t\times t}$, or the line graph of a subdivision of $W_{t\times t}$, where the {\em line graph} $L(F)$ of a graph $F$ is the graph with vertex set $E(F)$, such that two vertices of $L(F)$ are adjacent if and only if the corresponding edges of $F$ share an end (see Figure~\ref{fig:3basic}). We say a graph $G$ is \textit{$t$-clean} if $G$ does not contain a $t$-basic obstruction (as an induced subgraph). A graph class $\mca{G}$ is \textit{clean} if for every $t\in \poi$, there is a constant $w(t)\in \poi$ (depending on $\mca{G}$) for which every $t$-clean graph in $\mca{G}$ has treewidth at most $w(t)$. Since the basic obstructions have unbounded treewidth ($K_{t+1}$, $K_{t,t}$, subdivisions of $W_{t\times t}$ and line graphs of subdivisions of $W_{t\times t}$ are all known to have treewidth $t$), it follows that for every hereditary class of bounded treewidth, there exists some $t\in \poi$ such that every graph in the class is $t$-clean. The converse would be a particularly nice grid-type theorem for induced subgraphs: \textit{every hereditary class is clean.} This, however, is now known to be far from true, thanks to the numerous constructions \cite{deathstar, Davies2, Pohoata, layered-wheels} of graphs with arbitrarily large treewidth which are $t$-clean for small values of $t$ (and we will take a closer look at the one from \cite{Davies2, Pohoata} in a moment).

On the other hand, there are several hereditary classes that are known to be clean for highly non-trivial reasons. As a notable example, Korhonen \cite{Korhonen} proved that every graph class of bounded maximum degree is clean, settling a conjecture from \cite{aboulker}:
\begin{theorem}[Korhonen \cite{Korhonen}]\label{thm:korhonen}
For every $d\in \poi$, the class of all graphs with maximum degree at most $d$ is clean.
\end{theorem}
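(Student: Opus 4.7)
The plan is to bootstrap Theorem~\ref{wallminor} via a cleaning argument that fully exploits the degree bound. Fix $d,t\in \poi$ and suppose $G$ has maximum degree at most $d$ while $\tw(G)$ is very large in terms of $d$ and $t$; the goal is to exhibit a $t$-basic obstruction in $G$.

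First, observe that an induced $K_t$ requires a vertex of degree $t-1$ and an induced $K_{t,t}$ requires a vertex of degree $t$, so once $t>d+1$ the bounded-degree hypothesis automatically excludes these two basic obstructions. It therefore suffices to prove the following reduction: in a graph of maximum degree at most $d$, sufficiently large treewidth forces either a large induced subdivision of $W_{t\times t}$ or a large induced line graph of a subdivision of $W_{t\times t}$. By Theorem~\ref{wallminor}, I may fix $N=N(d,t)$ huge and extract a subgraph $H\subseteq G$ isomorphic to a subdivision of $W_{N\times N}$; let $B$ be the set of branch vertices of $H$ and $\mathcal{P}$ the set of subdivision paths joining branch vertices. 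The task is to promote $H$ to an induced subgraph (of a smaller but still large wall) of $G$.

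Second, classify the ``defects'' of $H$ inside $G$: an edge of $G[V(H)]$ not in $E(H)$ is a \emph{chord}, and any vertex of $V(G)\setminus V(H)$ adjacent to $V(H)$ is an \emph{attachment}. Because $\Delta(G)\leq d$, each vertex of $H$ supports at most $d$ chords, and each attachment contacts at most $d$ places of $H$. A pigeonhole/Ramsey argument on the $N^2$ bricks of $H$, iterated separately for each ``chord type'' (which pair of paths in $\mathcal{P}$ the chord connects, and which end of each path), lets me pass to a subdivided subwall $H'\subseteq H$ of size still polynomial in $N$ on which the chord pattern is highly regular: either no chords remain inside the chosen subwall (and I am done, producing an induced subdivision of $W_{t\times t}$), or the surviving chords are governed by a single repeating ``gadget''. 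Attachments are handled analogously, and the degree bound ensures only finitely many gadget types.

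The hard part will be the gadget analysis. For each repeating gadget I must show that, upon choosing a suitably large regular subpattern inside $H'$, the induced subgraph of $G$ on the selected vertices is either an induced subdivision of $W_{t\times t}$ (after re-routing paths through short chord ``shortcuts'' to obtain a genuine induced subdivision) or the line graph of a subdivision of $W_{t\times t}$ (arising precisely when the gadget contributes triangles glued along the bricks in the line-graph pattern of a wall). In the bounded-degree setting, these are the only two regimes that can persist, since any denser structure would force a vertex of degree exceeding $d$. Carrying this case analysis through, together with the Ramsey reduction of the previous paragraph, yields one of the two remaining $t$-basic obstructions as an induced subgraph of $G$, contradicting $t$-cleanness and completing the proof.
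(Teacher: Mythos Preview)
The paper does not prove Theorem~\ref{thm:korhonen}; it is quoted as a result of Korhonen~\cite{Korhonen} and used only as background motivation. There is therefore no proof in the paper to compare your attempt against.

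As for the proposal itself, the outline has a genuine gap at the decisive step. Extracting a subdivided wall as a subgraph via Theorem~\ref{wallminor} and noting that $\Delta(G)\le d$ limits the number of chords incident to each vertex is fine, but the substance of the theorem lies entirely in your third paragraph, and that paragraph asserts rather than argues. You claim that after a Ramsey reduction the surviving chord pattern is governed by a ``single repeating gadget'', and that any such gadget must yield either an induced subdivided wall or the line graph of one; but you give no mechanism for this dichotomy. Bounded degree does not by itself force the chord pattern to be periodic along the wall, nor does it obviously exclude gadgets that are neither re-routable into an induced subdivision nor assemblable into a line graph. The sentence ``any denser structure would force a vertex of degree exceeding $d$'' is not an argument: the issue is not density but the \emph{shape} of the chord configuration, and there are many low-degree configurations that are neither of the two target objects. (Also, attachments---vertices of $V(G)\setminus V(H)$ with neighbours in $H$---are irrelevant to whether $G[V(H)]$ is induced; only chords matter, so that part of the sketch is a distraction.)

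Korhonen's actual proof is considerably more delicate than this and does not proceed by a local gadget classification; if you want to supply a proof here you would need either to reproduce his argument or to make the gadget analysis precise, and the latter is not known to work.
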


\begin{figure}[t!]
\centering
\includegraphics[scale=0.65]{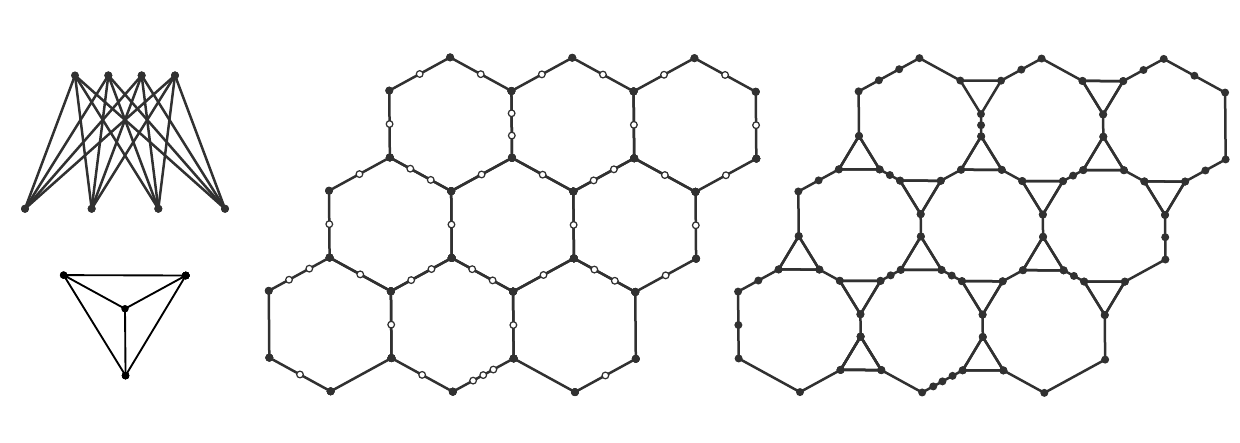}
\caption{The $4$-basic obstructions.}
\label{fig:3basic}
\end{figure}

One possible attempt at generalizing Theorem~\ref{thm:korhonen} is to look for clean classes under weaker assumptions than bounded maximum degree. For instance, bounded maximum degree is equivalent to excluding a fixed star as a subgraph, and in a recent joint work with Abrishami, we extended Theorem~\ref{thm:korhonen} to graphs that exclude a fixed subdivided star as an induced subgraph. In fact, we proved:

\begin{theorem}[Abrishami, Alecu, Chudnovsky, Hajebi, Spirkl \cite{twvii}]\label{tw7}
Let $H$ be a graph. Then the class of all $H$-free graphs is clean if and only if every component of $H$ is a subdivided star.
\end{theorem}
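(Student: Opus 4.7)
The theorem is an equivalence, which I address in two directions. For \textbf{necessity}, suppose some component $H_0$ of $H$ is not a subdivided star; then $H_0$ either contains a cycle or is a tree with at least two vertices of degree at least $3$. My plan is to exhibit, in each case, an $H$-free family of graphs with unbounded treewidth that are $t$-clean for a small fixed $t$. When $H_0$ contains a cycle of length $k$, one uses a construction of graphs of girth greater than $k$ with unbounded treewidth and no large basic obstruction --- for $k=3$, the layered wheels of \cite{layered-wheels} suffice, and higher-girth analogues handle the general case. When $H_0$ is a tree with at least two branch vertices, one tailors a construction whose ``branching profile'' avoids $H_0$ as an induced subgraph. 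Verifying $H$-freeness of each such construction is the main content of this direction.

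For \textbf{sufficiency}, assume every component of $H$ is a subdivided star. I aim to prove that every $H$-free $t$-clean graph $G$ has treewidth at most some $w(t)$, by induction on the number of components of $H$. The base case is that $H$ is a single subdivided star centered at $c$ with $s$ arms of lengths $\ell_1,\ldots,\ell_s$.

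Assume for contradiction that $G$ is $H$-free, $t$-clean, and $\tw(G)$ is arbitrarily large. By Korhonen's theorem (Theorem~\ref{thm:korhonen}), $G$ cannot have bounded maximum degree, so there is a vertex $v$ of very high degree. Since $G$ is $K_t$-free and $K_{t,t}$-free, a Ramsey-type argument inside the neighborhood of $v$ yields a large independent subset $S$ of neighbors of $v$. From each of $s$ carefully chosen vertices in $S$, using the Robertson--Seymour grid theorem (Theorem~\ref{wallminor}) in conjunction with the absence of large subdivided walls and their line graphs, one extracts pairwise non-adjacent induced paths in $G \setminus \{v\}$ of lengths $\ell_1,\ldots,\ell_s$; these paths, together with $v$ and its $s$ chosen neighbors, form an induced copy of $H$, contradicting $H$-freeness.

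For the inductive step, once one component of $H$ has been located, remove its vertex set together with a bounded neighborhood in $G$; since only boundedly many vertices are removed while the witnessing wall subdivision is much larger, the remainder still has unbounded treewidth, so the inductive hypothesis locates the remaining components in disjoint regions. The principal obstacle is the extraction step: producing many long, mutually non-adjacent induced paths emanating from a single high-degree vertex under only the hypothesis of $t$-cleanness. This is the technical heart of the proof and requires a detailed structural analysis of the interaction between balanced separators, induced paths, and the basic obstructions, along the lines of the arguments developed for Theorem~\ref{thm:korhonen}.
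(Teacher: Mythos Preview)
This theorem is quoted from \cite{twvii} and is not proved in the present paper, so there is no proof here against which to compare your attempt.

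On its own terms, your proposal is a plan rather than a proof, and you say so yourself: the ``extraction step'' in the sufficiency direction --- producing $s$ pairwise anticomplete induced paths of prescribed lengths out of a single high-degree vertex in a $t$-clean graph --- is where essentially all the content lies, and you defer it entirely (``This is the technical heart of the proof and requires a detailed structural analysis''). Korhonen's theorem does guarantee that a $t$-clean graph of enormous treewidth has a vertex of large degree, but it gives no control over how the rest of the graph attaches to that vertex's neighbourhood; there is no evident mechanism forcing long induced paths from distinct neighbours to be mutually anticomplete, and invoking Theorem~\ref{wallminor} does not supply one. The argument in \cite{twvii} does not proceed via a single high-degree vertex at all: it goes through the strong-block machinery (indeed, Theorem~\ref{noblocksmalltw_wall} of this paper is taken from \cite{twvii}) and an analysis of the many internally disjoint paths between block vertices, much closer in spirit to Sections~\ref{sec:deal}--\ref{sec:obtain} here than to a local argument around one vertex. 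Your necessity sketch is likewise incomplete: for the case where a component of $H$ is a tree with two branch vertices you provide no construction, only the assertion that one ``tailors'' something appropriate.
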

Another natural candidate for a configuration forcing large-degree vertices consists of several cycles sharing a single vertex. For $c\in \poi$, let us say a graph $G$ is \textit{$c$-pinched} if $G$ does not contain $c$ induced cycles, all going through a common vertex and otherwise pairwise disjoint and anticomplete (for disjoint subsets $X,Y$ of vertices in a graph $G$, we say that $X$ is \emph{anticomplete}
to $Y$ if no edges between $X$ and $Y$ are present in $G$, and that $X$ is \textit{complete} to $Y$ if all edges with an end in $X$ and an end in $Y$ are present in $G$). Note that $1$-pinched graphs are forests, which are the only graphs with treewidth $1$. For $c=2$, it is easily seen that all basic obstructions are $2$-pinched, and we will show that they are the only representatives of large treewidth in $2$-pinched graphs:

\begin{theorem}\label{thm:2pinched}
The class of all $2$-pinched graphs is clean.
\end{theorem}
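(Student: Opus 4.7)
My approach proceeds in two stages: a block decomposition reducing to the 2-connected case, followed by a grid-theorem-based analysis of the 2-connected case.

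\emph{Stage 1 (Reduction to 2-connected graphs).} Every vertex of a 2-connected graph on at least three vertices lies on an induced cycle: the shortest cycle through the vertex must be chordless, since any chord (incident to the vertex or not) produces a strictly shorter cycle through it. Now let $G$ be a 2-pinched graph, and suppose a cut vertex $v$ of $G$ lies in two distinct non-trivial blocks $B_1,B_2$ (i.e., 2-connected blocks on at least three vertices). Picking induced cycles $C_i\subseteq B_i$ through $v$ as above, the block structure forces $V(C_1)\cap V(C_2)=\{v\}$ and no edges between $V(C_1)\setminus\{v\}$ and $V(C_2)\setminus\{v\}$ (any such edge would merge $B_1$ and $B_2$ into a single larger 2-connected subgraph). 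This is precisely the forbidden 2-pinched configuration, a contradiction. Hence the non-trivial blocks of any 2-pinched graph are pairwise vertex-disjoint. Since $\tw(G)$ equals the maximum treewidth over the blocks of $G$, and since both 2-pinchedness and $t$-cleanness are inherited by induced subgraphs, it suffices to bound $\tw(G)$ when $G$ is 2-connected.

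\emph{Stage 2 (2-connected case).} Suppose for contradiction that $G$ is 2-connected, 2-pinched, $t$-clean, and has very large treewidth. By Theorem~\ref{wallminor}, $G$ contains a huge wall-subdivision $H$ as a subgraph. Since $G$ is $t$-clean, $H$ is not an induced subgraph of $G$, so chord edges of $G[V(H)]\setminus E(H)$ must appear. The plan is to analyze these chords: within the pure wall $H$, any two cycles through a branching vertex $b$ already share at least one incident edge, so producing two induced cycles through $b$ meeting only at $b$ requires rerouting through chord edges. I would show, via a Ramsey-type cleanup—restricting iteratively to sub-walls where the chord behavior is uniform—that the chord pattern is either too ``dispersed'', in which case one can explicitly construct two induced cycles meeting at a single branching vertex and anticomplete outside of it (contradicting 2-pinchedness), or too ``dense'', in which case the chord pattern itself hosts an induced basic obstruction ($K_t$, $K_{t,t}$, a subdivided sub-wall, or the line graph of a subdivided sub-wall), contradicting $t$-cleanness.

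\emph{Main obstacle.} The bulk of the technical work is in Stage 2. The 2-pinched condition is inherently pairwise—it bounds the interaction of only two induced cycles at a time—so amplifying it into a global statement about chord patterns on a huge wall requires careful bookkeeping: one must classify chord types (local versus long-range, at branching vertices versus in the interior of the arms, between distinct arms of a common branching vertex, etc.) and pair each with a matching extraction lemma, so that iterative Ramsey-style sub-wall thinning produces, in every case, either the pinched configuration or one of the four basic obstructions. Establishing this dichotomy cleanly is where I expect the difficulty to concentrate.
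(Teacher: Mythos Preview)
Your approach differs fundamentally from the paper's. The paper does not prove Theorem~\ref{thm:2pinched} directly; it is derived as an immediate corollary of the main result, Theorem~\ref{mainthm} (the case $h=1$ of Theorem~\ref{mainthmgeneral}): since no expansion of $PD_4$ is $2$-pinched, the fifth obstruction cannot arise in a $2$-pinched graph, so the basic obstructions are the only ones and cleanness follows. The machinery behind Theorem~\ref{mainthm}---strong blocks via Theorem~\ref{noblocksmalltw_wall}, then the constellation and alignment analysis of Sections~\ref{sec:deal} and~\ref{sec:obtain}---has no counterpart in your outline.

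Your Stage~1 is correct but inessential: the reduction to blocks uses only the standard identity $\tw(G)=\max_B \tw(B)$ over blocks $B$, and your (true) observation that non-trivial blocks of a $2$-pinched graph are pairwise vertex-disjoint plays no role in that reduction. The substance lies in Stage~2, and there you have a plan rather than a proof. The dichotomy you posit---chord pattern either ``dispersed'' (yielding two induced cycles meeting at one vertex) or ``dense'' (yielding an induced basic obstruction)---is precisely the kind of statement the induced-subgraph grid-theorem literature has found hard to make rigorous. Extracting an \emph{induced} basic obstruction from a non-induced wall with uncontrolled chords does not follow from generic Ramsey cleanup on sub-walls; the known results of this flavor (Korhonen's Theorem~\ref{thm:korhonen}, the block machinery of \cite{twvii}) all require substantial additional inputs, and no argument of the shape you describe is known to succeed even under strong extra hypotheses. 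As it stands, Stage~2 is a statement of intent with the key lemma---a precise and provable formulation of ``dispersed vs.\ dense''---left unspecified, so there is a genuine gap.
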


One may then ask if the class of $c$-pinched graphs is clean for all $c\in \poi$. But this is false, as shown by a 10-year-old construction due to Pohoata \cite{Pohoata}, also re-discovered recently by Davies \cite{Davies2}, which we describe below.

Given an integer $k$, we write $\poi_k$ to denote the set of all positive integers less than or equal to $k$ (so we have $\poi_k=\varnothing$ if and only if $k\leq 0$). For $s\in \poi$, let $PD_s$ be the graph whose vertex set can be partitioned into a stable set $S=\{x_1,\ldots, x_s\}$, and $s$ pairwise disjoint induced paths $L_1,\ldots, L_s$ with no edges between them, such that the following hold.
\begin{enumerate}[(PD1), leftmargin=19mm, rightmargin=7mm]
\item\label{PD1} For every $i\in \poi_s$, $L_i$ has length $s-1$ (and so exactly $s$ vertices).
\item \label{PD2} For every $i\in \poi_s$, the vertices in the interior of $L_i$ may be enumerated from one end to the other as $u_{1}^i\dd \cdots\dd u_s^i$ such that for every $j\in \poi_s$, $x_j$ has \textbf{exactly} one neighbor in $V(L_i)$, namely $u_j^i$.
\end{enumerate}
\begin{figure}[t!]
\centering
\includegraphics[scale=0.75]{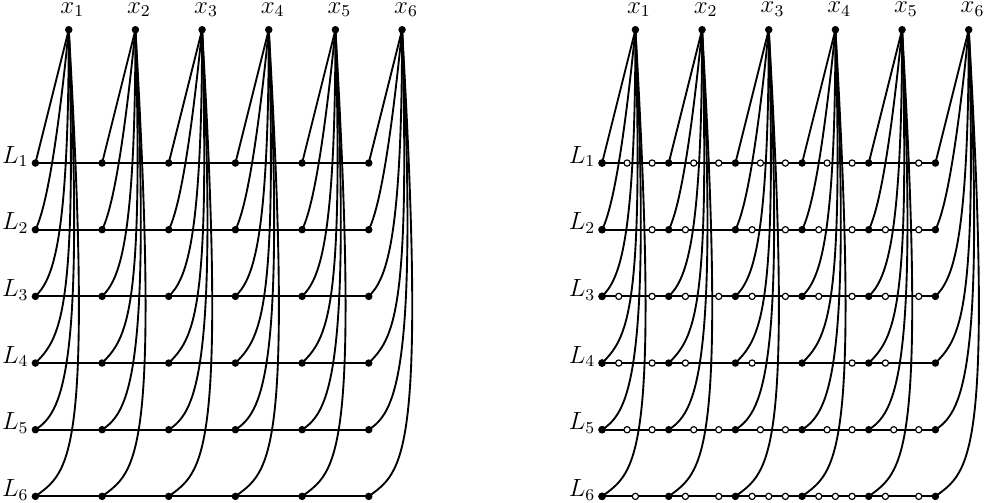}
\captionof{figure}{The graph $PD_6$ (left) and an expansion of $PD_6$ (right).}
\label{fig:davies}
\end{figure}
See Figure~\ref{fig:davies}. It is straightforward to show that for every $s\in \poi$, $PD_s$ is a $4$-clean, $3$-pinched graph of treewidth at least $s$. More generally, as we will prove in Theorem~\ref{thm:Oproperties}, the same holds for every graph obtained from $PD_s$ by subdividing the edges of $L_1,\ldots, L_s$ arbitrarily. We refer to these graphs as \textit{expansions} of $PD_s$ (see Figure~\ref{fig:davies}).
It follows that expansions of the Pohoata-Davies graphs are ``non-basic'' obstructions to bounded treewidth in pinched graphs. Strikingly, the converse turns out to hold, too. We prove that:

\begin{theorem}\label{mainthm}
For all $c,s,t\in \poi$, every $c$-pinched graph of sufficiently large treewidth contains either a $t$-basic obstruction or an expansion of $PD_s$.
\end{theorem}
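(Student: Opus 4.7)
The plan is to start from the Grid Theorem and use the $c$-pinched hypothesis to force a very rigid attachment pattern. Set up the contradiction: let $G$ be a $c$-pinched, $t$-clean graph with no expansion of $PD_s$, and suppose $\tw(G)$ exceeds any bound we will specify in terms of $c,s,t$. By Theorem~\ref{wallminor}, $G$ contains a subdivision of a huge wall $W_{n\times n}$ as a subgraph, for $n$ chosen large. The goal is to show that the $c$-pinched hypothesis, combined with $t$-cleanness, forces this wall to host an expansion of $PD_s$. When $c\leq 2$ the conclusion already follows from Theorem~\ref{thm:2pinched} (no expansion of $PD_s$ can be $2$-pinched, so this case collapses), so the genuinely new work is for $c\geq 3$.

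I would first execute a \emph{cleaning} phase, drawing on the central-bag and extraction technology from earlier papers in this series: extract from the wall subdivision a long family $P_1\ll P_N$ of pairwise vertex-disjoint, pairwise anticomplete induced paths in $G$---candidates for the rows $L_1\ll L_s$ of an expansion of $PD_s$---together with a ``spine'' providing coordinated access to the $P_i$. Since $G$ is $t$-clean, the wall itself cannot be induced in full, so each $P_i$ is obtained from a selected branch of the wall after absorbing or discarding attachments; the $t$-cleanness assumption guarantees that enough of the wall survives so that the $P_i$'s remain pairwise anticomplete.

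The second and crucial step uses the $c$-pinched hypothesis. By iterated Ramsey-style selection along the spine, I would pick $s$ pairwise non-adjacent vertices $x_1\ll x_s$ and $s$ indices $i_1\ll i_s$ so that each $x_j$ has at least one neighbor on each $P_{i_k}$. To upgrade ``at least one'' to the \emph{exactly-one-neighbor} pattern demanded by condition~\ref{PD2}, observe that any candidate with two or more neighbors on a single $P_{i_k}$ contributes extra induced cycles through itself inside $\{x_j\}\cup V(P_{i_k})$; combined with the anticomplete rows $P_{i_{k'}}$ and with the attachments of $x_j$ to them, having enough such ``extra cycles'' forces a $c$-flower at $x_j$ and contradicts $c$-pinchedness, unless a $t$-basic obstruction surfaces during the extraction (in which case we are done). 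Trading between these two outcomes while progressively subdividing/pruning the $P_{i_k}$ eventually secures the exactly-one-neighbor pattern, producing an induced expansion of $PD_s$ and contradicting the initial assumption.

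\emph{Main obstacle.} The technical heart is precisely this final trade-off: enforcing condition~\ref{PD2} at all $s$ chosen vertices simultaneously, while keeping the rows $P_{i_k}$ pairwise anticomplete and the stable set $\{x_1\ll x_s\}$ edge-free. Each round of ``cleaning away'' excess neighbors of one $x_j$ risks breaking the anticompleteness or edge-freeness guarantees already established for the others, so the iteration must be scheduled so that the parameters shrink in a controlled way. Managing this bookkeeping---and invoking the right combination of cleaning lemmas, central-bag arguments, and Ramsey-type extractions from earlier papers in the series at each scale---is where the $c$-pinched hypothesis is used essentially and where the heavy lifting of the proof lives.
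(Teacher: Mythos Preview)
Your proposal diverges from the paper's route and has two genuine gaps.

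\textbf{Different starting point.} The paper does not argue from the Grid Theorem. It invokes Theorem~\ref{noblocksmalltw_wall} (from \cite{twvii}): every $t$-clean graph of large treewidth contains a strong $k$-block. From a strong block it extracts a \emph{plain constellation} (Theorem~\ref{thm:block_to_constellation}), and from a large plain constellation it extracts an $(s,1)$-array, which is precisely an expansion of $PD_s$ (Theorem~\ref{thm:dealwithconst}). Your wall-based ``cleaning phase'' would have to replace all of this, and ``central-bag and extraction technology from earlier papers'' is not an argument: passing from a subdivided wall \emph{as a subgraph} to many pairwise anticomplete induced paths with a common stable set of attachers is exactly the hard step that the block/constellation machinery is built to supply.

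\textbf{The alignment condition is missing.} Your sketch only targets ``each $x_j$ has exactly one neighbour on each $P_{i_k}$''. That alone is not an expansion of $PD_s$: condition~\ref{PD2} also requires the unique neighbours of $x_1,\ldots,x_s$ to occur \emph{in the same linear order} along every path. This ordering requirement (properties~\ref{AL} and \ref{AR}) is where the $c$-pinched hypothesis does its real work. The paper handles it via Lemma~\ref{lem:pinchedalignment}: in a meager $(s,1)$-constellation, either the attachment intervals on $L$ are pairwise disjoint (giving an alignment), or many of them pairwise intersect, and the latter manufactures $c$ long cycles through a single vertex. Your plan never mentions the ordering and offers no mechanism to enforce it.

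\textbf{The ``exactly one neighbour'' step is miscast.} Two neighbours of $x_j$ on one path $P_{i_k}$ contribute one induced cycle through $x_j$, not several; to force a $c$-flower you need $x_j$ to have an $x_j$-gap of positive length on $c$ \emph{distinct} anticomplete paths. That is indeed how the paper obtains $1$-hollowness (see \eqref{st:usehollow} in the proof of Theorem~\ref{thm:dealwithconst}), but there it is the easy finishing move \emph{after} alignments are already in hand. Your proposal inverts the order of difficulty and leaves the hard half unaddressed.
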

Since the basic obstructions and the graphs $PD_s$ have arbitrarily large treewidth, Theorem~\ref{mainthm} provides a full grid-type theorem for the class of $c$-pinched graphs for all $c\in \poi$. More generally, our main result in this paper, Theorem~\ref{mainthmgeneral}, renders a complete description of the induced subgraph obstructions to bounded treewidth in the class of \textit{$(c,h)$-pinched} graphs for all $c,h\in \poi$, that is, graphs containing no $c$ induced cycles each of length at least $h+2$, all going through a common vertex and otherwise pairwise disjoint and anticomplete (so a graph is $c$-pinched if and only if it is $(c,1)$-pinched). Indeed, the strengthening is direct enough that Theorem~\ref{mainthm} is the special case of Theorem~\ref{mainthmgeneral} where $h=1$. Note also that no expansion of the graph $PD_{4}$ is $2$-pinched. Therefore, Theorem~\ref{mainthm} implies Theorem~\ref{thm:2pinched}.

Let us remark that grid-type  theorems involving non-basic obstructions, such as Theorem~\ref{mainthm} (or Theorem  \ref{mainthmgeneral}, rather), are as of yet quite rare. Indeed, the only other examples we are aware of are the analogous result for ``$c$-perforated'' graphs -- which we proved recently \cite{twix} --
and an result from \cite{woodcirclegraphs} concerning the class of ``circle graphs.'' In fact, the proof of Theorem~\ref{mainthmgeneral} bears a close resemblance to that of the main result of \cite{twix}, and crucially builds on some tools developed there.

To elaborate, let $\mca{G}$ be the non-clean class for which we wish to prove a grid-type theorem. Roughly speaking, the idea is to break the proof into two steps: first, we show that every $t$-clean graph in the class with sufficiently large treewidth must contain an ``approximate version'' of the non-basic obstruction we are looking for, and second, we perform further analysis on the approximate version in pursuit of the exact one. Luckily, in the case of pinched graphs, the approximate and the exact non-basic obstructions are actually quite close. Note that the (expansions of) Pohoata-Davies graphs consist of a stable set and a number of pairwise disjoint and anticomplete paths such that every vertex in the stable set has a neighbor in every path. We call such a configuration in a graph $G$ a ``constellation'' (a notion also used in \cite{twix} as an approximate non-basic obstruction for perforated graphs).

As for the present paper, our first goal is to show that for all $c,t,h\in \poi$, every $t$-clean $(c,h)$-pinched graph of sufficiently large treewidth contains a huge constellation. This involves a useful result from an earlier paper in this series \cite{twvii} concerning the ``local connectivity'' in clean classes, accompanied by a collection of Ramsey-type arguments to tidy up an induced subgraph of $G$ with high local connectivity. The second step then is to turn a constellation into an expansion of a Pohoata-Davies-like graph. To that end, for every path $L$ in the ``path side'' of the constellation, we consider the intersection graph $I$ of the minimal subpaths of $L$ containing all neighbors of each vertex in the ``stable set side'' $S$. Provided that $S$ is large enough, $I$ contains either a big stable set or a big clique. In the former case, on $L$, the neighbors of the vertices from the stable set do not interlace, and the resulting ``alignment'' of vertices according to their neighbors on $L$ signals the emergence of a Pohoata-Davies-like structure. In the latter case, several vertices in $S$ turn out to have neighbors in several pairwise disjoint and anticomplete subpaths of $L$. This eventually yields $c$ induced cycles with a vertex in common and otherwise pairwise disjoint and anticomplete, a contradiction.

We take the two steps above in the reverse order in Sections~\ref{sec:deal} and \ref{sec:obtain}, respectively. In the next section, we discuss the connectivity result from \cite{twvii} (together with its bells and whistles). Section~\ref{sec:ABC} introduces a variety of notions from \cite{twix} which we use in this paper, and also features the statement of our main result, Theorem~\ref{mainthmgeneral}, of which we give a complete proof in Section~\ref{sec:end}.

\section{Blocks}\label{sec:blocks}

We begin with a couple of definitions. Let $G = (V(G),E(G))$ be a graph. For an induced subgraph $H$ of $G$ and a vertex $x\in V(G)$, we denote by $N_H(x)$ the set of all neighbors of $x$ in $H$, and write $N_H[x]=N_H(x)\cup \{x\}$. A \textit{stable set in $G$} is a set of pairwise non-adjacent vertices. A {\em path in $G$} is an induced subgraph of $G$ which is a path. If $P$ is a path in $G$, we write $P = p_1 \dd \cdots \dd p_k$ to mean that $V(P) = \{p_1, \dots, p_k\}$ and $p_i$ is adjacent to $p_j$ if and only if $|i-j| = 1$. We call the vertices $p_1$ and $p_k$ the \emph{ends of $P$} and write $\partial P=\{p_1,p_k\}$. The \emph{interior of $P$}, denoted $P^*$, is the set $P \setminus \partial P$. For $x,y\in V(P)$, we denote by $x\dd P\dd y$ the subpath of $P$ with ends $x,y$. The \textit{length} of a path is its number of edges. Similarly, a {\em cycle in $G$} is an induced subgraph of $G$ that is a cycle. If $C$ is a cycle in $G$, we write $C= c_1 \dd \cdots \dd c_k\dd c_1$ to mean that $V(C) = \{c_1, \dots, c_k\}$ and $c_i$ is adjacent to $c_j$ if and only if $|i-j|\in \{1,k-1\}$. The \textit{length} of a cycle is also its number of edges. For a collection $\mca{P}$ of paths in $G$, we adopt the notations $V(\mca{P})=\bigcup_{P\in \mca{P}}V(P)$, $\mca{P}^*=\bigcup_{P\in \mca{P}}P^*$ and $\partial \mca{P}=\bigcup_{P\in \mca{P}}\partial P$.

% Let $x\in V(G)$.
% We denote by $N_G(x)$ the set of all neighbors of $x$ in $G$, and by $N_G[x]$ the set $N_G(x)\cup \{x\}$. For an induced subgraph $H$ of $G$, we define $N_H(x)=N_G(x) \cap H$, $N_H[x]=N_G[x]\cap H$. Also, for $X\subseteq G$, we denote by $N_G(X)$ the set of all vertices in $G\setminus X$ with at least one neighbor in $X$, and define $N_G[X]=N_G(X)\cup X$. 
Let $k\in \poi$ and let $G$ be a graph. A \textit{$k$-block} in $G$ is a pair $(B, \mca{P})$ where $B\subseteq V(G)$ with $|B|\geq k$ and $\mca{P}:{B\choose 2}\rightarrow 2^{V(G)}$ is map such that $\mca{P}_{\{x,y\}}=\mca{P}(\{x,y\})$, for each $2$-subset $\{x,y\}$ of $B$, is a set of at least $k$ pairwise internally disjoint paths in $G$ from $x$ to $y$. We say that $(B,\mca{P})$ is \textit{strong} if for all distinct $2$-subsets $\{x,y\}, \{x',y'\}$ of $B$, we have $V(\mca{P}_{\{x,y\}})\cap V(\mca{P}_{\{x',y'\}})=\{x,y\}\cap\{x',y'\}$; that is, each path $P\in \mca{P}_{\{x,y\}}$ is disjoint from each path $P'\in \mca{P}_{\{x',y'\}}$, except $P$ and $P'$ may share an end. In \cite{twvii}, with Abrishami we proved the following:

\begin{theorem}[Abrishami, Alecu, Chudnovsky, Hajebi, Spirkl \cite{twvii}]\label{noblocksmalltw_wall}
For all $k,t\in \poi$, there is a constant $\xi=\xi(k,t)\in \poi$ such that for every $t$-clean graph $G$ of treewidth more than $\xi$, there is a strong $k$-block in $G$.
\end{theorem}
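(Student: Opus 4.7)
The plan is to combine the classical Grid Theorem with Ramsey-type cleaning that crucially uses $t$-cleanness. First, I would apply Theorem~\ref{wallminor} to $G$ to obtain a subdivision $W^*$ of a huge wall $W_{N\times N}$ as a subgraph of $G$, with $N$ chosen sufficiently large in terms of $k$ and $t$. The wall $W^*$ serves as the scaffolding from which both the base set $B$ and the required path families are extracted: fix $k$ widely spaced branch vertices of $W^*$ as $B$, and carve out $\binom{k}{2}$ pairwise disjoint \emph{corridors} of $W^*$, one for each unordered pair in $B$. In each corridor, the wall structure supplies $k$ internally disjoint subgraph paths from $x$ to $y$. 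If these paths were automatically induced in $G$, one would already have a strong $k$-block.

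The serious obstacle is upgrading the subgraph paths to \emph{induced} paths in $G$. Since $W^*$ need not be an induced subgraph, each corridor carries a potentially chaotic set of chord edges. Here the hypothesis that $G$ is $t$-clean is decisive: if the chord pattern on a corridor were too rich, a suitable Ramsey extraction would produce one of the forbidden $t$-basic obstructions (an induced $K_t$, $K_{t,t}$, subdivided $W_{t\times t}$, or line graph thereof) inside the corridor, a contradiction. Iterated applications of Ramsey's theorem (and Erd\H{o}s--Szekeres-type arguments for long paths) on chord types should, in each corridor separately, let us extract induced subpaths with the same endpoints as the original subgraph paths.

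The main challenge will be executing these cleanings in parallel without sacrificing disjointness: a Ramsey step within one corridor may delete vertices that a neighboring corridor relies on, and chord edges may leak between corridors. The remedy is to start with a wall so large that a generous buffer of untouched vertices separates each pair of corridors, and to clean one corridor at a time, while also deleting from $G$ anything outside the current corridor that threatens the others. The resulting parameter $\xi(k,t)$ is then likely to be of tower type in $k$ and $t$, but only its finiteness is required for the statement.
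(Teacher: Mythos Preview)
This theorem is not proved in the present paper at all: it is quoted from \cite{twvii} and used as a black box (see Section~\ref{sec:blocks}). So there is no ``paper's own proof'' to compare against here, and any proposal must stand on its own.

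Unfortunately, your outline has a genuine gap at the very first routing step. You write that ``in each corridor, the wall structure supplies $k$ internally disjoint subgraph paths from $x$ to $y$.'' But every vertex of a wall (and hence of any subdivision $W^*$ of a wall) has degree at most~$3$, so by Menger's theorem there are at most three internally vertex-disjoint paths in $W^*$ between any two fixed vertices $x$ and $y$. For $k\ge 4$ the wall simply cannot supply the required family $\mca{P}_{\{x,y\}}$, no matter how large $N$ is or how the corridors are laid out. The subsequent cleaning and buffering arguments are therefore moot: there is nothing to clean.

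What actually drives results of this type is not the Grid Theorem but the (separate) fact that graphs of sufficiently large treewidth contain a $k$-block in the classical sense: a set $B$ of at least $k$ vertices no two of which can be separated in $G$ by fewer than $k$ vertices. Menger then gives, for each pair in $B$, $k$ internally disjoint paths \emph{in $G$} (not in some wall scaffold); each such path can be shortcut to an induced path without spoiling internal disjointness, since the shortcut stays inside the original path's vertex set. The real work---and the place where $t$-cleanness enters---is enforcing the \emph{strong} condition (disjointness of path families across different pairs), and this is handled in \cite{twvii} by arguments quite different from corridor-by-corridor Ramsey cleaning. If you want to reconstruct the proof, that is the starting point to use.
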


The following result, which guarantees that the graphs we work with exclude ``short subdivisions'' of large complete graphs, paves the way for our application of Theorem~\ref{noblocksmalltw_wall}. Recall that a \textit{subdivision} of graph $H$ is a graph $H'$ obtained from $H$ by replacing the edges of $H$ with
pairwise internally disjoint paths of non-zero length between the corresponding ends. Let $r\in \poi\cup \{0\}$. An \textit{$(\leq r)$-subdivision} of $H$ is a subdivision of $H$
in which the path replacing each edge has length at most $r+1$.
\begin{theorem}[Dvo\v{r}\'{a}k, see Theorem 6 in \cite{dvorak}; Lozin and Razgon, see Theorem 3 in \cite{lozin}]\label{dvorak}
For every graph $H$ and all $d\in \poi\cup \{0\}$ and $t\in \poi$, there is a constant $m=m(H,d,t)\in \poi$ with the following property. Let $G$ be a graph with no induced subgraph isomorphic to a subdivision of $H$. Assume that $G$ contains a $(\leq d)$-subdivision of $K_m$ as a subgraph. Then $G$ contains either $K_t$ or $K_{t,t}$.
\end{theorem}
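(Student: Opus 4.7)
The plan is to perform an iterated Ramsey-type cleaning on the branch vertices of a $(\leq d)$-subdivision of $K_m$ in $G$, for $m$ chosen large enough by the Ramsey numbers involved. Denote the branch vertices by $v_1,\ldots, v_m$ and the subdivision paths by $P_{ij}$, where $P_{ij}$ has length $\ell_{ij}\in \{1,\ldots, d+1\}$. First, by Ramsey applied to pairs $\{i,j\}$ colored by whether $v_iv_j\in E(G)$, I may assume, after passing to a large subset $B_0$ of branch vertices, that $B_0$ is independent in $G$ (otherwise a large clique on branch vertices gives $K_t$ outright). Next, by Ramsey on pairs colored by $\ell_{ij}\in \{2,\ldots, d+1\}$, I pass to $B_1\subseteq B_0$ on which all subdivision paths share a common length $\ell$.

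The second phase aims to make the chord structure of the cleaned subdivision uniform. Chord edges come in two kinds: edges from a branch vertex $v\in B_1$ to an internal vertex of $P_{u_1u_2}$ with $v\notin \{u_1,u_2\}$, and edges between interiors of two distinct paths $P_{u_1u_2}$ and $P_{u_3u_4}$. Each path has at most $d$ internal vertices, so the chord ``type'' of any $3$-subset or any disjoint $4$-subset of $B_1$ takes only boundedly many values (depending on $d$ and $\ell$). Hypergraph Ramsey applied to $3$- and $4$-subsets of $B_1$ then produces $B\subseteq B_1$ of arbitrarily large size on which this chord type is uniform.

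Finally, if the uniform chord pattern on $B$ contains no edges at all, then the subdivision on $B$ is an induced $(\leq d)$-subdivision of $K_{|B|}$ in $G$, which contains an induced subdivision of $H$ once $|B|\geq |V(H)|$. Otherwise, some chord edge is forced to appear in every relevant tuple. Choosing $t$ pairwise disjoint pairs $\{a_i,b_i\}$ from $B$ together with $t$ further vertices $c_1,\ldots, c_t\in B$ disjoint from all the $\{a_i,b_i\}$, the uniformly present chord witness in each $P_{a_ib_i}$ together with $\{c_1,\ldots,c_t\}$ yields a $K_{t,t}$ subgraph in $G$, which the earlier cleaning renders induced. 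The main obstacle I anticipate is the bookkeeping in this last step: enumerating the possible uniform non-empty chord types and verifying that each of them yields either an induced $K_{t,t}$ (after possibly one further Ramsey refinement to eliminate spurious edges among the chord-witness vertices themselves) or an induced $K_t$, rather than some pathological configuration that blocks an induced subdivision of $H$ without forcing a basic obstruction.
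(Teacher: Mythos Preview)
The paper does not give a proof of this statement: it is quoted as a known result, attributed to Dvo\v{r}\'{a}k and to Lozin and Razgon, and is used only as a black box (in the proof of Theorem~\ref{thm:block_to_constellation}). There is therefore no proof in the paper to compare your proposal against.

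On its own merits, your sketch follows the standard line for this type of result and is essentially how the cited proofs go: Ramsey on the branch vertices to make them stable, Ramsey on pairs to equalize path lengths, then hypergraph Ramsey on small tuples to homogenize the chord pattern, followed by a case analysis on the resulting uniform type. Your self-identified obstacle is real but routine; one further Ramsey pass on the witness vertices handles the spurious edges. One point you should make explicit: the colouring on $3$- and $4$-subsets is not symmetric in the roles of the vertices (in a triple $\{u_1,u_2,v\}$ you must say which vertex plays the role of $v$, and in a $4$-set there are three ways to split into two pairs), so you need to record the full orbit under the symmetric group as the colour, or use ordered hypergraph Ramsey. You should also account for chords between paths sharing a branch vertex (the case $P_{u_1u_2}$ versus $P_{u_1u_3}$), which falls under the $3$-subset colouring but is a different phenomenon from the branch-to-interior chord you mention.
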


\section{Bundles and constellations}\label{sec:ABC}

In this section we state our main result, Theorem~\ref{mainthmgeneral}. We start with a few definitions that first appeared in \cite{twix}.

Let $G$ be a graph and let $l\in \poi$. By an \textit{$l$-polypath in $G$} we mean a set $\mca{L}$ of $l$ pairwise disjoint paths in $G$. We say $\mca{L}$ is \textit{plain} if every two distinct paths $L,L'\in \mca{L}$ are anticomplete in $G$.  Also, two polypaths $\mca{L}$ and $\mca{L}'$ in $G$ are said to be \textit{disentangled} if $V(\mca{L})\cap V(\mca{L}')=\varnothing$. For $s\in \poi$, an \textit{$(s,l)$-bundle in $G$} is a pair $\mf{b}=(S_{\mf{b}},\mca{L}_{\mf{b}})$ where $S_{\mf{b}}\subseteq V(G)$ with $|S_{\mf{b}}|=s$ and $\mca{L}_{\mf{b}}$ is an $l$-polypath in $G$ (note that $S_{\mf{b}}$ and $V(\mca{L}_{\mf{b}})$ are not necessarily disjoint). If $l=1$, say $\mca{L}_{\mf{b}}=\{L_{\mf{b}}\}$, we also denote the $(s,1)$-bundle $\mf{b}$ by the pair $(S_{\mf{b}},L_{\mf{b}})$.
Given an $(s,l)$-bundle $\mf{b}$ in $G$, we write $V(\mf{b})=S_{\mf{b}}\cup V(\mca{L}_{\mf{b}})$, and for every $L\in \mca{L}_{\mf{b}}$, we denote by $\mf{b}_L$ the $(s,1)$-bundle $(S_{\mf{b}},L)$. Also, we say that $\mf{b}$ is \textit{plain} if the $l$-polypath $\mca{L}_{\mf{b}}$ is plain. For two bundles $\mf{b}$ and $\mf{b}'$ in a graph $G$, we say  $\mf{b}$ and $\mf{b}'$ are \textit{disentangled} if $V(\mf{b})\cap V(\mf{b}')=\varnothing$. 

An \textit{$(s,l)$-constellation in $G$} is an $(s,l)$-bundle $\mf{c}=(S_{\mf{c}},\mca{L}_{\mf{c}})$ in $G$ such that $S_{\mf{c}}$ is a stable set (of cardinality $s$) in $G\setminus V(\mca{L}_{\mf{c}})$, and every $s\in S_{\mf{c}}$ has a neighbor in every path $L\in \mca{L}_{\mf{c}}$.

Let $\mf{c}$ be an $(s,1)$-constellation in a graph $G$. For a vertex $x\in S_{\mf{c}}$, by an \textit{$x$-gap in $\mf{c}$} we mean a path $P$ in $L_{\mf{c}}$ (possibly of length zero) where $x$ is adjacent to the ends of $P$ and anticomplete to $P^*$. For $d\in \poi$, we say $\mf{c}$ is \textit{$d$-hollow} if for every $x\in S_{\mf{c}}$, every $x$-gap in $\mf{c}$ has length less than $d$. Also, we say $\mf{c}$ is \textit{$d$-meager} if every vertex in $L_{\mf{c}}$ is adjacent to at most $d$ vertices in $S_{\mf{c}}$ (see Figure~\ref{fig:asterism}). In general, for an $(s,l)$-constellation $\mf{c}$, we say $\mf{c}$ is \textit{$d$-hollow} (\textit{$d$-meager}) if for every $L\in \mca{L}_{\mf{c}}$, $\mf{c}_L$ is $d$-hollow ($d$-meager).

For $s\in \poi$, an \textit{$s$-alignment} in $G$ is a triple $(S,L,\pi)$ where $(S,L)$ is an $(s,1)$-constellation in $G$ and $\pi: \poi_s\rightarrow S$ is a bijection such that for some end $u$ of $L$, the following holds.
 \begin{enumerate}[(AL), leftmargin=19mm, rightmargin=7mm]
     \item\label{AL} For all $i,j\in \poi_s$ with $i<j$, every neighbor $v_i\in L$ of $\pi(i)$ and every neighbor $v_j\in L$ of $\pi(j)$, the path in $L$ from $u$ to $v_j$ contains $v_i$ in its interior. In other words, traversing $L$ starting at $u$, all neighbors of $\pi(i)$ appear before all neighbors of $\pi(j)$.
 \end{enumerate}
See Figure~\ref{fig:asterism}. In particular, $(S,L)$ is $1$-meager.

\begin{figure}[t!]
  \centering
  \includegraphics[scale=0.8]{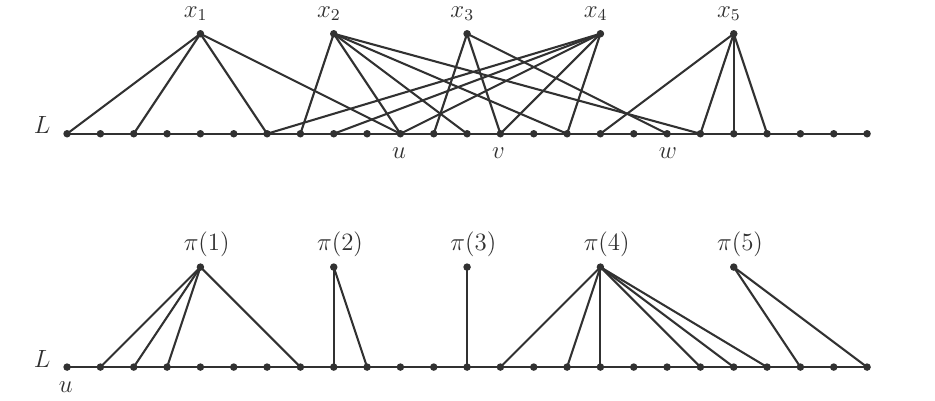}
  \captionof{figure}{Top: a $(5,1)$-constellation $\mf{c}$ with $S_\mf{c}=\{x_1,x_2,x_3,x_4,x_5\}$ and $L_{\mf{c}}=L$. Note that $\mf{c}$ is $3$-meager (with $u$ being the only vertex in $L$ with three neighbors in $S_{\mf{c}}$) and $6$-hollow (with the $x_3$-gap $v\dd L\dd w$ of length five being the longest). Bottom: a $5$-alignment.}
  \label{fig:asterism}
  \end{figure}

For $h,s\in \poi$, an \textit{$(s,h)$-array} in a graph $G$ is a plain, $h$-hollow $(s,s)$-constellation $\mf{a}$ in $G$ which satisfies the following.
 \begin{enumerate}[(AR), leftmargin=19mm, rightmargin=7mm]
     \item\label{AR} There exists a bijection $\pi:\poi_s\rightarrow S_{\mf{a}}$ such that for every $L\in \mca{L}_{\mf{a}}$, $(S_{\mf{a}},L,\pi)$ is an $s$-alignment in $G$.
 \end{enumerate}
See Figure~\ref{fig:array}. It is readily observed that for $X\subseteq V(G)$, if there is an $(s,1)$-array $\mf{a}$ in $G$ with $V(\mf{a})=X$, then $G[X]$ contains an expansion of the graph $PD_s$, and if $G[X]$ is isomorphic to an expansion of $PD_s$ for some $X\subseteq V(G)$, then there is an $(s,1)$-array $\mf{a}$ in $G$ with $V(\mf{a})=X$. Moreover, we have:

\begin{theorem}\label{thm:Oproperties}
   Let $h,s\in \poi$ and let $G$ be a graph. Let $\mf{a}$ be an $(s,h)$-array in $G$. Then $G[V(\mf{a})]$ is a $4$-clean, $(3,h)$-pinched graph of treewidth at least $s$.
\end{theorem}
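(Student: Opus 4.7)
\noindent\textit{Plan.} The statement has three parts: treewidth at least $s$, $(3,h)$-pinchedness, and $4$-cleanness. I would handle each separately; by far the most delicate is excluding induced subdivisions of $W_{4\times 4}$, which is where I expect the real work.

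\noindent\textit{Treewidth at least $s$.} I would exhibit $K_{s,s}$ as a minor of $G[V(\mf{a})]$ with branch sets $\{\{x\} : x\in S_{\mf{a}}\}$ on one side and $\{V(L) : L\in \mca{L}_{\mf{a}}\}$ on the other. The constellation axiom guarantees every $x$-branch set is adjacent to every $L$-branch set; plainness of $\mca{L}_{\mf{a}}$ and stability of $S_{\mf{a}}$ yield the non-adjacencies within each side. Since $\tw(K_{s,s})=s$ and treewidth is minor-monotone, this gives $\tw(G[V(\mf{a})])\geq s$.

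\noindent\textit{$(3,h)$-pinchedness.} Suppose for contradiction we have induced cycles $C_1,C_2,C_3$ of length at least $h+2$, meeting pairwise only at a vertex $v$, with $C_i\setminus\{v\}$ pairwise anticomplete. A vertex of $\bigcup\mca{L}_{\mf{a}}$ has degree at most $3$ in $G[V(\mf{a})]$ (two path neighbors plus at most one in $S_{\mf{a}}$ by $1$-meagerness, which follows from \ref{AL}); three cycles through $v$ use six edges at $v$, so $v=\pi(k)\in S_{\mf{a}}$. Each $C_i$ must meet $S_{\mf{a}}$ in a second vertex $r_i=\pi(\rho_i)$: otherwise $C_i\setminus\{v\}$ is a $v$-gap of length $<h$ by $h$-hollowness, forcing $|C_i|\leq h+1$. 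The key step uses the alignment \ref{AL}: on each $L$, the $L$-segment of $C_{i'}$ joining two cyclically-consecutive $S_{\mf{a}}$-indices traverses the zones of $L$ in $\poi_s$-order, and so contains every intermediate zone; if $\rho_i$ strictly lay between two such consecutive indices in $\poi_s$, this segment would meet a neighbor of $r_i\in C_i$, violating anticompleteness. Thus the whole list of $S_{\mf{a}}$-indices of $C_{i'}$ lies on the same side of $\rho_i$ in $\poi_s$ as $k$. A short case analysis on the signs of $\rho_1-k,\rho_2-k,\rho_3-k$ (assuming WLOG $\rho_1>k$, splitting on whether $\rho_2,\rho_3$ lie in $(-\infty,k)$ or $[\rho_1,\infty)$) shows three distinct non-$k$ integers cannot satisfy this pairwise condition, giving the contradiction.

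\noindent\textit{$4$-cleanness.} Triangle-freeness is immediate: $S_{\mf{a}}$ is stable, distinct members of $\mca{L}_{\mf{a}}$ are anticomplete, and each $L$ is triangle-free. This rules out $K_4$ and also the line graph of any subdivision of $W_{4\times 4}$, since each degree-$3$ vertex of the wall yields a triangle after taking the line graph. For $K_{4,4}$, every vertex has degree $4$, but $1$-meagerness caps the degree of vertices in $\bigcup\mca{L}_{\mf{a}}$ at $3$, so all eight vertices would lie in $S_{\mf{a}}$, contradicting stability. Finally, suppose $H\subseteq G[V(\mf{a})]$ is an induced subdivision of $W_{4\times 4}$, and let $v$ be an interior branch vertex of $H$ (so $\deg_H(v)=3$). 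The three hexagonal $W_{4\times 4}$-faces at $v$ produce three induced cycles $F_1,F_2,F_3$ of $G[V(\mf{a})]$ that pairwise share exactly the subdivided wall-edge through $v$. Running the same zone/alignment analysis as above on the non-shared portions of the $F_i$'s forces the ``secondary'' $S_{\mf{a}}$-index of each $F_i$ onto the opposite side of $\pi^{-1}(v)$ in $\poi_s$ from each of the others, which is impossible for three distinct integers.

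\noindent\textit{Main obstacle.} The wall-subdivision case is the technically hard step. Two issues need care: (a) ensuring that each face cycle $F_i$ really contributes a second $S_{\mf{a}}$-vertex (automatic for $h=1$ since every induced cycle must then use at least two $S_{\mf{a}}$-vertices, but for larger $h$ one must separately rule out the possibility that some $F_i$ is contained in a single $L$, using that wall faces have length at least $6$ and arguing via $h$-hollowness); and (b) checking that the ``pairwise opposite sides'' contradiction from part~(ii) still applies even though $F_1,F_2,F_3$ share a subdivided edge at $v$ rather than only the vertex $v$---the point being that the shared part lives inside a single $L$ and does not interfere with the zone analysis on the other $L$'s used by exactly one of $F_i,F_{i'}$.
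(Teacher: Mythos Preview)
Your treewidth and $(3,h)$-pinchedness arguments are fine and essentially match the paper (the paper phrases the separation step as ``$x,x_2$ lie in different components of $J\setminus N_J[x_1]$'' once one has $\pi^{-1}(x)<\pi^{-1}(x_1)<\pi^{-1}(x_2)$, which is a cleaner version of your zone analysis).

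The $4$-cleanness argument, however, has a genuine gap. Your claim that $G[V(\mf{a})]$ is triangle-free is false for $h\geq 2$: the alignment condition \ref{AL} does not forbid a single vertex $x\in S_{\mf{a}}$ from being adjacent to two consecutive vertices $u,u'$ of some $L\in\mca{L}_{\mf{a}}$, and $h$-hollowness only bounds the length of $x$-gaps by $h-1$, so a length-$1$ gap (and hence the triangle $x\hbox{-}u\hbox{-}u'\hbox{-}x$) is permitted whenever $h\geq 2$. This breaks your exclusion of line graphs of subdivided walls. Your wall-subdivision argument via three face cycles has further problems you partly flag: nothing forces the chosen branch vertex $v$ to lie in $S_{\mf{a}}$ (a path vertex can have degree exactly $3$ in $J$), and for large $h$ a face cycle may use only one $S_{\mf{a}}$-vertex; your treatment of the shared subdivided edge is also only asserted, not argued.

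The paper avoids all of this with a single uniform argument. It observes that $W_{4\times 4}$ contains an induced subdivision of $K_4$ with at most one unsubdivided edge; hence if $J$ contains a subdivided $W_{4\times 4}$ or its line graph, then $J$ contains an induced $W$ which is a subdivision of $K_4$ with at most one unsubdivided edge, or the line graph of such. One checks that for every $w\in V(W)$ the graph $W\setminus N_W[w]$ is connected. On the other hand $\tw(W)\geq 3$ forces $|S_{\mf{a}}\cap V(W)|\geq 3$, and picking $w_2\in S_{\mf{a}}\cap V(W)$ with $\pi^{-1}$-value strictly between those of two others, the alignment makes $W\setminus N_W[w_2]$ disconnected---the same separation observation you used for pinchedness. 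This handles walls and their line graphs simultaneously and works for all $h$.
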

\begin{proof}
 Let $J=G[V(\mf{a})]$. Note that $J$ contains $K_{s,s}$ as a minor (by contracting each path $L\in \mca{L}_{\mf{a}}$ into a vertex), which implies that $\tw(J)\geq s$. For the rest of the proof, let $\pi:\poi_s\rightarrow S_{\mf{a}}$ be the bijection satisfying \ref{AR}. 
 
Assume that $J$ has an induced subgraph isomorphic to a subdivision of $W_{4\times 4}$ or the line graph of a subdivision of $W_{4\times 4}$. Note that $W_{4\times 4}$ has an induced subgraph isomorphic to a subdivision of $K_4$ with exactly one unsubdivided edge (see Figure~\ref{fig:k4sub}). Thus, $J$ has an induced subgraph $W$ that is isomorphic to either a subdivision of $K_4$ with at most one unsubdivided edge or the line graph of a subdivision of $K_4$ with at most one unsubdivided edge. In either case, it is straightforward to observe that for every vertex $w\in V(W)$, the graph $W\setminus N_W[w]$ is connected. On the other hand, $W$ is an induced subgraph of $J$ with $\tw(W)\geq 3$ (because $W$ contains $K_4$ as a minor). Consequently, we have $|S_{\mf{a}}\cap V(W)|\geq 3$; say $w_1,w_2,w_3\in S_{\mf{a}}\cap V(W)$ such that $\pi(w_1)<\pi(w_2)<\pi(w_3)$. But now $W\setminus N_W[w_2]$ is disconnected, a contradiction. We deduce that $J$ has no induced subgraph isomorphic to a subdivision of $W_{4\times 4}$ or the line graph of a subdivision of $W_{4\times 4}$. Moreover, it is easy to check that $J$ is $K_4$-free and $K_{2,3}$-free. Hence, $J$ is $4$-clean.
\begin{figure}[t!]
    \centering
    \includegraphics[scale=0.6]{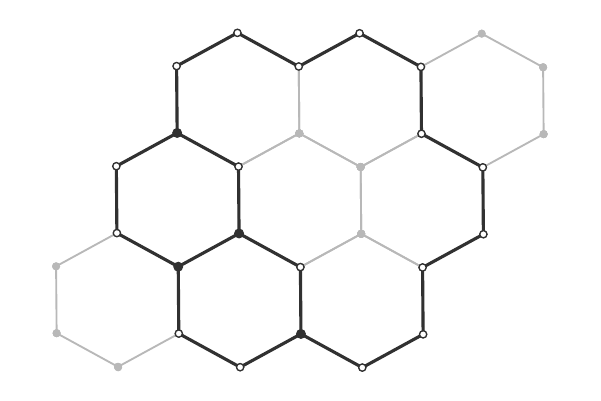}
    \caption{An induced subgraph of $W_{4\times 4}$ isomorphic to a subdivision of $K_4$ with exactly one unsubdivided edge.}
    \label{fig:k4sub}
\end{figure}

 It remains to show that $J$ is $(3,h)$-pinched. Suppose for a contradiction that there are three cycles $C_1,C_2,C_3$ in $J$ of length at least $h+2$ with $C_1\cap C_2\cap C_3=\{x\}$ and $C_1\setminus \{x\},C_2\setminus \{x\},C_3\setminus \{x\}$ are pairwise disjoint and anticomplete. Since $x$ has degree at least six in $J$, it follows that $x\in S_{\mf{a}}$. Also, since $\mf{a}$ is $h$-hollow, it follows that there is no cycle of length at least $h+2$ in $J[V(\mca{L}_{\mf{a}}\cup \{x\})]$. Thus, for every $l\in \{1,2,3\}$, we may pick $x_{l}\in C_{l}\cap (S_{\mf{a}}\setminus \{x\})\neq \varnothing$. By symmetry, we may assume that there are distinct $l_1,l_2\in \{1,2,3\}$ for which $\pi^{-1}(x)$ is smaller than $\pi^{-1}(x_{l_1})$ and $\pi^{-1}(x_{l_2})$. In particular, we may assume without loss of generality that $\pi^{-1}(x)< \pi^{-1}(x_1)< \pi^{-1}(x_2)$. But then $x,x_2\in C_2$ are in different components of $J\setminus N_J[x_1]$ (see Figure~\ref{fig:array}), which violates the fact that $C_1\setminus \{x\}$ and $C_2\setminus \{x\}$ are disjoint and anticomplete. This completes the proof of Theorem~\ref{thm:Oproperties}.
\end{proof}
\begin{figure}[t!]
  \centering
  \includegraphics[scale=0.6]{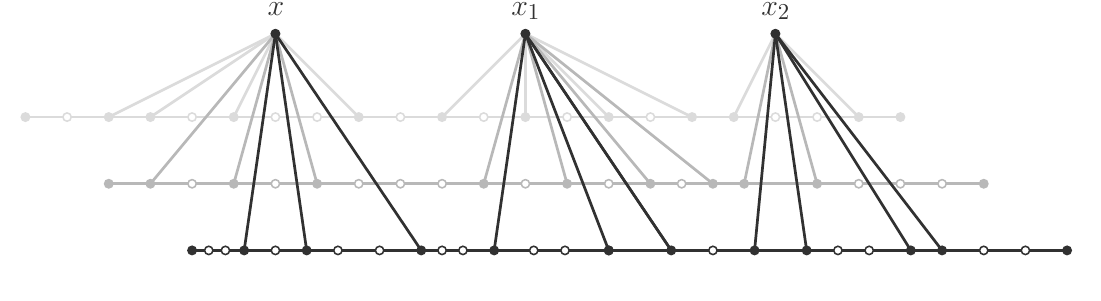}
  \captionof{figure}{A $(3,4)$-array (vertex labels $x,x_1$ and $x_2$ are relevant in the proof of Theorem~\ref{thm:Oproperties}).}
  \label{fig:array}
  \end{figure}
 Now we can state the main result of this paper:
\begin{theorem}\label{mainthmgeneral}
    For all $c,h,s,t\in \poi$, there is a constant $\tau=\tau(c,h,s,t)\in \poi$ such that for every $t$-clean $(c,h)$-pinched graph $G$ of treewidth more than $\tau$, there is an $(s,h)$-array in $G$.
\end{theorem}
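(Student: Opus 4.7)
The plan is to follow the two-stage strategy sketched in the introduction. In the first stage I would show that, for $s',l'$ chosen suitably large as functions of $c,h,s,t$, every $t$-clean $(c,h)$-pinched graph $G$ of sufficiently large treewidth contains a plain $h$-hollow $(s',l')$-constellation. In the second stage I would refine this constellation into an $(s,h)$-array.

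\emph{Stage 1 (constellation extraction).} Apply Theorem~\ref{noblocksmalltw_wall} with $k=k(c,h,s,t)$ large enough to obtain a strong $k$-block $(B,\mca{P})$ in $G$. Because $G$ is $t$-clean, in particular $G$ contains neither $K_t$ nor $K_{t,t}$, so Theorem~\ref{dvorak} forbids short subdivisions of large complete graphs in $G$; this forces most paths in $\mca{P}$ to be long. Repeatedly applying Ramsey-type cleanups on the internal vertices of these paths (using local connectivity coming from the strong $k$-block) yields a plain $(s',l')$-constellation $\mf{c}_0=(S_0,\mca{L}_0)$ of arbitrarily prescribed size. To upgrade $\mf{c}_0$ to be $h$-hollow, observe that if some $x\in S_0$ had a gap of length at least $h$ on $c$ distinct paths of $\mca{L}_0$, then these gaps, together with the corresponding edges at $x$ and the anticompleteness of $\mca{L}_0$, would exhibit $c$ induced cycles of length at least $h+2$ meeting exactly in $x$ and otherwise pairwise disjoint and anticomplete, contradicting $(c,h)$-pinchedness. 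Hence each $x\in S_0$ has long gaps on at most $c-1$ of the paths, and a pigeonhole/Ramsey argument on the set of bad pairs lets us pass to a sub-polypath on which the constellation is $h$-hollow.

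\emph{Stage 2 (constellation to array).} Start from a plain, $h$-hollow $(s',l')$-constellation $\mf{c}$ with $s',l'$ huge in terms of $c,s,h$. For each $L\in \mca{L}_{\mf{c}}$ and each $x\in S_{\mf{c}}$, let $I_x^L$ be the minimal subpath of $L$ containing every neighbor of $x$ on $L$. Ramsey's theorem applied to the intersection graph of $\{I_x^L:x\in S_{\mf{c}}\}$ on each $L$, followed by a pigeonhole over $L\in \mca{L}_{\mf{c}}$, lets us assume the same outcome (clique or stable set) holds on every $L$ relative to a common large subset $S\subseteq S_{\mf{c}}$. In the \textbf{clique} case on some single $L$, Helly on the path $L$ and the $h$-hollow property together produce several vertices of $S$ all having neighbors in the same collection of pairwise disjoint and anticomplete subpaths of $L$; combining these neighbors with the non-adjacency of $S$ yields $c$ induced cycles of length at least $h+2$ through a common vertex and otherwise pairwise disjoint and anticomplete, contradicting $(c,h)$-pinchedness. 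In the \textbf{stable set} case, on every $L$ the selected intervals $I_x^L$ are pairwise disjoint and therefore linearly ordered along $L$, producing on each $L$ an ordering of $S$ witnessing \ref{AL}. A final Ramsey argument on the orderings of $S$ coming from the different $L\in \mca{L}_{\mf{c}}$ yields a single bijection $\pi:\poi_s\to S$ for which $(S,L,\pi)$ is an $s$-alignment on every surviving $L$, giving the desired $(s,h)$-array.

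The main obstacle I anticipate is the clique case in Stage 2: turning "many pairwise intersecting neighbor-intervals on a single $L$" into \emph{exactly} $c$ induced cycles of length at least $h+2$ meeting in a single common vertex and otherwise pairwise disjoint and anticomplete. Ensuring inducedness of each cycle, the length bound $\geq h+2$, and the pairwise anticompleteness condition simultaneously requires a delicate selection of subpaths of $L$ (balancing the Helly intersection with the $h$-hollow control on gaps), and this is where the quantitative dependence of $\tau$ on $c,h,s,t$ will ultimately be determined.
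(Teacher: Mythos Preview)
Your two-stage plan matches the paper's strategy, and your stable-set branch of Stage~2 as well as your hollowness argument (long gaps on $c$ paths through a single $x$ contradict $(c,h)$-pinchedness) are both exactly right. The genuine gap is in the clique branch of Stage~2, and it is precisely the obstacle you flagged: you are invoking the wrong invariant.

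Being $h$-hollow says that every $x$-gap along $L$ has length \emph{less than} $h$; in other words, consecutive neighbours of each $x\in S$ on $L$ are close. This works \emph{against} building long induced cycles through $x$: any induced cycle of the form $x\hbox{-} a\hbox{-} L\hbox{-} b\hbox{-} x$ with $a,b$ consecutive neighbours of $x$ has length at most $h+1$, so ``balancing the Helly intersection with the $h$-hollow control on gaps'' cannot produce cycles of length $\geq h+2$. What the paper uses instead is \emph{meagerness}: first pass to a $t$-meager constellation (Lemma~\ref{lem:meagerconst}, a short Ramsey argument showing that on all but boundedly many paths no vertex of $L$ sees $t$ vertices of $S$, else one finds $K_t$ or $K_{t,t}$). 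With $|S|$ a suitable multiple of $h$ and each vertex of $L$ seeing fewer than $t$ members of $S$, the shortest subpath of $L$ hitting a neighbour of every $x\in S$ is forced to have at least $h$ vertices; this is what supplies the length $\geq h+2$ in the cycles that contradict $(c,h)$-pinchedness (Lemma~\ref{lem:pinchedalignment}). Once the clique branch is eliminated this way and alignments are obtained on each path, a pigeonhole over the $s!$ possible bijections $\pi$ synchronises them, and only then is the $h$-hollow reduction applied, at the very end, to certify the array. So in the paper meagerness precedes and drives the alignment step, while hollowness is a finishing touch; in your proposal the roles are reversed, and without meagerness the clique case does not close.
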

In view of Theorem~\ref{thm:Oproperties}, Theorem~\ref{mainthmgeneral} yields, for all $c,h\in \poi$, a complete description of unavoidable induced subgraphs of $(c,h)$-pinched graphs with large treewidth. Moreover, as mentioned above, there is an $(s,1)$-array in a graph $G$ if and only if $G$ contains an expansion of $PD_s$. So Theorem~\ref{mainthm} is the special case of Theorem~\ref{mainthmgeneral} for $h=1$. We also point out that for $s\geq 4$ and $h\in \poi$, if a graph $G$ is $(2,h)$-pinched, then there is no $(s,h)$-array in $G$. Hence, Theorem~\ref{mainthm} implies a strengthening of Theorem~\ref{thm:2pinched}, that for every $h\in \poi$, the class of all $(2,h)$-pinched graphs is clean.

\section{Dealing with plain constellations}\label{sec:deal}

In this section, we prove the following:
\begin{theorem}\label{thm:dealwithconst} For all $c,h,s,t\in \poi$, there are constants $\sigma=\sigma(c,h,s,t)\in \poi$ and $\lambda=\lambda(c,h,s,t)\in \poi$ with the following properties. Let $G$ be a $(c,h)$-pinched graph which does not $K_t$ and $K_{t,t}$. Assume that there exists a plain $(\sigma,\lambda)$-constellation $\mf{c}$ in $G$. Then there is an $(s,h)$-array in $G$.
  \end{theorem}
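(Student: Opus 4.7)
My plan is to perform a sequence of Ramsey-type refinements on the plain $(\sigma,\lambda)$-constellation $\mf{c}$, with $\sigma$ and $\lambda$ chosen sufficiently large (tower-type in $c,h,s,t$). For each path $L\in\mca{L}_{\mf{c}}$ and each $x\in S_{\mf{c}}$, I set $I_x^L$ to be the minimal subpath of $L$ containing $N_L(x)$, and let $J_L$ denote the interval-intersection graph on $S_{\mf{c}}$ in which $xy$ is an edge iff $I_x^L\cap I_y^L\neq\varnothing$. Colouring each pair $\{x,y\}\subseteq S_{\mf{c}}$ by the vector in $\{0,1\}^{\lambda}$ that records, for each $L$, whether $I_x^L$ and $I_y^L$ intersect, Ramsey with $2^{\lambda}$ colours produces a monochromatic $S_1\subseteq S_{\mf{c}}$ such that on each $L\in\mca{L}_{\mf{c}}$, $S_1$ is either a clique in $J_L$ (all intervals pairwise intersect) or a stable set (pairwise disjoint).

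The main task is to rule out the clique case. Suppose $S_1$ is a clique in $J_{L^*}$ for some $L^*\in\mca{L}_{\mf{c}}$. By Helly's theorem for intervals on a line, the intervals $(I_x^{L^*})_{x\in S_1}$ share a common vertex $v^*\in L^*$. Combining the clique structure near $v^*$ with the pairwise anticomplete paths of $\mca{L}_{\mf{c}}\setminus\{L^*\}$, after further Ramsey refinements controlling the local neighbour-profiles of the vertices of $S_1$ on $L^*$ and on the other paths, I extract a $(\leq d)$-subdivision of $K_m$ in $G$ for $m=m(H,d,t)$ as in Theorem~\ref{dvorak}, where $H$ is the graph obtained from the friendship graph $F_c$ (the union of $c$ triangles sharing a single vertex) by subdividing each edge exactly $h$ times. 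The $K_t$- and $K_{t,t}$-freeness of $G$, combined with Theorem~\ref{dvorak}, then forces an induced subdivision of $H$ in $G$; any such subdivision contains $c$ induced cycles through a common vertex, each of length at least $3(h+1)\geq h+2$, pairwise disjoint (except at the common vertex) and pairwise anticomplete, contradicting the $(c,h)$-pinched hypothesis.

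Hence on every $L\in\mca{L}_{\mf{c}}$ the set $S_1$ is stable in $J_L$, so the intervals $(I_x^L)_{x\in S_1}$ are pairwise disjoint and induce a linear order on $S_1$. A second Ramsey on pairs of $S_1$, colouring by the vector of these orderings across the $\lambda$ paths, yields $S_2\subseteq S_1$ on which all orderings are mutually consistent (up to reversal of the traversal direction on each path), giving a bijection $\pi\colon\poi_{|S_2|}\to S_2$ that witnesses condition \ref{AL} on every $L\in\mca{L}_{\mf{c}}$.

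It remains to impose $h$-hollowness. Call a pair $(x,L)$ with $x\in S_2$ and $L\in\mca{L}_{\mf{c}}$ \emph{bad} if $x$ has an $x$-gap of length at least $h$ on $L$. If some $x\in S_2$ were bad on $c$ distinct paths, then $x$ together with the $c$ long $x$-gaps would form $c$ induced cycles of length at least $h+2$ through $x$, pairwise disjoint (except at $x$) and pairwise anticomplete (since $\mf{c}$ is plain), violating the $(c,h)$-pinched hypothesis. Thus every $x\in S_2$ is bad on fewer than $c$ paths. A Markov-style averaging argument then supplies $s$ paths $\mca{L}^*\subseteq\mca{L}_{\mf{c}}$ and $s$ vertices $S^*\subseteq S_2$ such that no $x\in S^*$ is bad on any $L\in\mca{L}^*$; together with the bijection inherited from $\pi$, this yields the desired $(s,h)$-array. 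The hardest step of the proof is the extraction of the $(\leq d)$-subdivision of $K_m$ in the clique case; this requires intricate Ramsey control of the distribution of $N_L(x)$ around $v^*$ on $L^*$ and of the positions of the intervals $I_x^{L'}$ on the other paths $L'$, in order to build the internally disjoint short pair-paths demanded by Theorem~\ref{dvorak}.
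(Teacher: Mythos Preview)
Your outline has the right overall shape, and the last two steps---extracting a common ordering $\pi$ across many paths and then enforcing $h$-hollowness by observing that each $x$ can have a long $x$-gap on at most $c-1$ of the paths---match the paper's argument essentially verbatim. The genuine gap is your treatment of the ``clique case.''

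You claim that when the intervals $(I_x^{L^*})_{x\in S_1}$ pairwise intersect, hence by Helly share a vertex $v^*$, you can extract a $(\leq d)$-subdivision of $K_m$ and invoke Theorem~\ref{dvorak}. But you never say how, and there is no evident route. That the intervals share $v^*$ only says each $x\in S_1$ has neighbours on $L^*$ on both sides of $v^*$ (or at $v^*$); those neighbours may lie arbitrarily far from $v^*$, so there is no bound on the length of an $x$--$y$ path through $L^*$, and paths through different $L\in\mca{L}_{\mf{c}}$ are no shorter since the lengths of those paths are uncontrolled. ``Further Ramsey refinements controlling the local neighbour-profiles'' does not address this: no finite colouring of pairs encodes unbounded distances. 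The paper handles the overlap case by a completely different mechanism, and without Theorem~\ref{dvorak}. It first uses $K_t$- and $K_{t,t}$-freeness only to make the constellation $t$-meager (Lemma~\ref{lem:meagerconst}), and then, on each single path $L$, applies Lemma~\ref{lem:pinchedalignment}: if no large alignment exists on $L$, then (via Lemma~\ref{lem:ast_to_alignment}) one can repeatedly split $L$ at an overlap point into many pairwise anticomplete subpaths, each still seen by a large $S'\subseteq S_{\mf{c}}$, producing a plain $(|S'|,\ell)$-constellation supported inside $L$ with $\ell$ arbitrarily large. A pigeonhole on $S'$ then finds one $x\in S'$ whose neighbours in $c$ of these subpaths are far apart, yielding $c$ induced cycles of length at least $h+2$ through $x$ and contradicting the $(c,h)$-pinched hypothesis directly. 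In short, overlaps are killed by the pinched assumption itself, not by Dvo\v{r}\'ak; replacing your $K_m$-subdivision step with this split-and-find-cycles argument would repair the proof.
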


We need a couple of lemmas, beginning with the following. Although we have proved a similar result in \cite{twix}, we include the proof here as it is short.

\begin{lemma}\label{lem:ast_to_alignment}
     Let $a,d,s,l\in \poi$ and let $G$ be a graph. Assume that there exists a $d$-meager $(a^{l-1}(s+d(l-1)),1)$-constellation $(S_0,L_0)$ in $G$. Then one of the following holds.
      \begin{enumerate}[\rm (a)]
        \item\label{lem:ast_to_alignment_a} There exists an $a$-alignment $(S,L,\pi)$ in $G$ with $S\subseteq S_0$ and $L\subseteq L_0$.
        \item\label{lem:ast_to_alignment_b} There exists a plain $(s,l)$-constellation $\mf{c}$ in $G$ such that $S_{\mf{c}}\subseteq S_0$ and $L\subseteq L_0$ for every $L\in \mca{L}_{\mf{c}}$. In particular, $\mf{c}$ is also $d$-meager.
      \end{enumerate}
\end{lemma}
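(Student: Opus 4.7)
I will prove the lemma by induction on $l$. The base case $l=1$ is immediate: the hypothesis gives a $d$-meager $(s,1)$-constellation $(S_0,L_0)$, and taking $\mf{c}=(S_0,\{L_0\})$ yields a plain $(s,1)$-constellation (plainness is vacuous for a single path) satisfying outcome~(b).

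For the inductive step, fix an end $u$ of $L_0$, which induces a linear order on $V(L_0)$. For each $x\in S_0$, let $f(x),g(x)\in V(L_0)$ be the first and last neighbor of $x$ on $L_0$ with respect to this order, and let $I_x$ be the subpath of $L_0$ from $f(x)$ to $g(x)$. Form the interval graph $H$ on vertex set $S_0$, where $x\sim_H y$ iff $I_x\cap I_y\neq \varnothing$. I will split into two cases according to whether $H$ has independence number at least $a$. In the first case, $a$ pairwise $H$-nonadjacent vertices $y_1\ll y_a$ have spans $I_{y_1}\ll I_{y_a}$ that are pairwise disjoint; after relabelling by order along $L_0$, setting $\pi(i)=y_i$ gives an $a$-alignment, and outcome~(a) holds.

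In the second case, $\alpha(H)\leq a-1$. Since interval graphs are perfect, $\omega(H)=\chi(H)\geq |S_0|/\alpha(H)\geq n/(a-1)\geq n/a=a^{l-2}(s+d(l-1))$, where $n=a^{l-1}(s+d(l-1))$. By Helly's theorem for intervals on a line, a clique in $H$ corresponds to a family of intervals sharing a common point; so there is $v\in V(L_0)$ and $S_1\subseteq S_0$ with $|S_1|\geq a^{l-2}(s+d(l-1))$ such that $f(x)\leq v\leq g(x)$ along $L_0$ for every $x\in S_1$. Set $S_2=S_1\setminus N_G(v)$; the $d$-meager hypothesis gives $|S_2|\geq |S_1|-d\geq a^{l-2}(s+d(l-2))$, and for every $x\in S_2$ we have the strict inequalities $f(x)<v<g(x)$. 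In particular $v$ is an internal vertex of $L_0$, so removing $v$ splits $L_0$ into two subpaths $L'$ and $L''$; since $L_0$ is an induced path, $L'$ and $L''$ are anticomplete in $G$, and every $x\in S_2$ has a neighbor on $L'$ and a neighbor on $L''$.

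Now pick $S_2'\subseteq S_2$ of size exactly $a^{l-2}(s+d(l-2))$. Then $(S_2',L')$ is a $d$-meager $(a^{l-2}(s+d(l-2)),1)$-constellation, so by the inductive hypothesis applied with parameter $l-1$, either we already obtain an $a$-alignment $(S,L,\pi)$ with $S\subseteq S_0$ and $L\subseteq L_0$ (conclusion~(a)), or we obtain a plain $(s,l-1)$-constellation $\mf{c}'$ with $S_{\mf{c}'}\subseteq S_0$ and each path of $\mca{L}_{\mf{c}'}$ contained in $L'$. In the latter case, I extend to $\mf{c}=(S_{\mf{c}'},\mca{L}_{\mf{c}'}\cup\{L''\})$: $S_{\mf{c}'}\subseteq S_2$ guarantees each vertex of $S_{\mf{c}'}$ has a neighbor in $L''$, while the paths of $\mca{L}_{\mf{c}'}$ lie in $L'$ and are anticomplete to $L''$ in $G$, so $\mf{c}$ is a plain $(s,l)$-constellation lying inside $(S_0,L_0)$ and hence $d$-meager, giving conclusion~(b). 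The main delicate point is the arithmetic in the inductive step, namely confirming that $n/a-d\geq a^{l-2}(s+d(l-2))$ (which reduces to $a^{l-2}\geq 1$) and that the point $v$ furnished by Helly cannot be an endpoint of $L_0$ (since otherwise every $x\in S_1$ would be adjacent to $v$, contradicting $d$-meagerness once $|S_1|>d$); beyond that, the argument is a clean marriage of the interval-graph / Helly step with the inductive extension.
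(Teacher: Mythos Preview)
Your proof is correct and follows essentially the same inductive scheme as the paper's: find a vertex $v$ on $L_0$ lying in the span $I_x$ of many $x\in S_0$, delete $v$ (losing at most $d$ vertices of $S_0$ by meagerness), and recurse on one side while saving the other side as the new $l$th path. The only difference is cosmetic: you locate the piercing vertex via perfectness of interval graphs plus Helly, whereas the paper builds a maximal family $Y$ of disjoint spans (with a tiebreaker), shows the set $W=\{u_y:y\in Y\}$ of their left endpoints meets every span, and then applies pigeonhole on $|W|<a$.
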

  \begin{proof}For fixed $a,d$ and $s$, we proceed by induction on $l$. Note that if $l=1$,  then $(S_0,L_0)$ is an $(s,1)$-constellation in $G$ satisfying Lemma~\ref{lem:ast_to_alignment}\ref{lem:ast_to_alignment_b}. Thus, we may assume that $l\geq 2$.

  Let $u,v$ be the ends of $L_0$. For every vertex $x\in S_{0}$, traversing $L_0$ from $u$ to $v$, let $u_x$ and $v_x$ be the first and the last neighbor of $x$ in $L_0$ (possibly $u_x=v_x$), and let $L_x=u_x\dd L_0\dd v_x$. Let $Y$ be a subset of $S_0$ for which the paths $\{L_y:y\in Y\}$ are pairwise disjoint, such that $|Y|$ is as large as possible, and subject to this, $\sum_{y\in Y}|u\dd L_0\dd u_y|$ is as large as possible. Clearly, if $|Y|\geq a$, then Lemma~\ref{lem:ast_to_alignment}\ref{lem:ast_to_alignment_a} holds. Therefore, we may assume that $|Y|<a$. Let $W=\{u_y:y\in Y\}$; then we have $|W|<a$. We claim that:
  
  \sta{\label{st:minimality} For every $x\in S_0$, we have $L_x\cap W\neq\varnothing$.}

Suppose for a contradiction that for some $x\in S_0$, we have $L_x\cap W=\varnothing$. Then $x\notin Y$. By the maximality of $Y$, there exists $y\in Y$ such that $L_x\cap L_y\neq \varnothing$. Since $L_x\cap W=\varnothing$, it follows that $u_y\notin L_x$ (and so $u_x\in L_y$), and $L_x\cap L_{y'}=\varnothing$ for all $y'\in Y\setminus \{y\}$. In particular, we have $|u\dd L_0\dd u_x|>|u\dd L_0\dd u_y|$, and the paths $\{L_{y'}:y'\in (Y\setminus \{y\})\cup \{x\}\}$ are pairwise disjoint. But now $(Y\setminus \{y\})\cup \{x\}$ is a better choice than $Y$, a contradiction. This proves \eqref{st:minimality}.
\medskip
  
Since $|S_0|=a^{l-1}(s+(l-1)d)$ and $|W|<a$, it follows from \eqref{st:minimality} that there exists $A\subseteq S_0$ with $|A|=a^{l-2}(s+d(l-1))$ and a vertex $w\in W$, such that for every $x\in A$, we have $w\in L_x$. On the other hand, since $(S_0,L_0)$ is $d$-meager, there are at most $d$ vertices in $A$ which are adjacent to $w$, and so 
  $$|A\setminus N_A(w)|\geq a^{l-2}(s+d(l-1))-d\geq a^{l-2}(s+d(l-2))>0.$$
  It follows that $L_0\setminus \{w\}$ has two components, say $L_1$ and $L_2$, and there exists $B\subseteq A$ with $|B|=a^{l-2}(s+d(l-2))$ such that every vertex in $B$ has a neighbor in $L_1$ and a neighbor in $L_2$. It follows that $(B,L_1)$ and $(B,L_2)$ are both $d$-meager $(a^{l-2}(s+d(l-2)),1)$-constellations in $G$. From the induction hypothesis applied to $(B,L_1)$, we deduce that either there exists an $a$-alignment $(S,L)$ in $G$ with $S\subseteq B\subseteq S_0$ and $L\subseteq L_1\subseteq L_0$, or there exists a plain $(s,l-1)$-constellation $\mf{c}_1$ in $G$ such that $S_{\mf{c}_1}\subseteq B\subseteq S_0$ and $L\subseteq L_1\subseteq L_0$ for every $L\in \mca{L}_{\mf{c}_1}$. In the former case, Lemma~\ref{lem:ast_to_alignment}\ref{lem:ast_to_alignment_a} holds, as required. In the latter case, $\mf{c}=(S_{\mf{c}_1}, \mca{L}_{\mf{c}_1}\cup \{L_2\})$ is a plain $(s,l)$-constellation in $G$ such that $S_{\mf{c}}=S_{\mf{c}_1}\subseteq S_0$ and $L\subseteq L_1\cup L_2\subseteq L_0$ for every $L\in \mca{L}_{\mf{c}}$, and so Lemma~\ref{lem:ast_to_alignment}\ref{lem:ast_to_alignment_b} holds. This completes the proof of Lemma~\ref{lem:ast_to_alignment}.
  \end{proof}

Indeed, for pinched graphs, it turns out that Lemma~\ref{lem:ast_to_alignment}\ref{lem:ast_to_alignment_a} is the only possible outcome:

\begin{lemma}\label{lem:pinchedalignment}
     Let $a,c,d,h\in \poi$ and let $G$ be a $(c,h)$-pinched graph. Assume that there exists a $d$-meager $(a^{2cdh-1}d(h+2cdh-1),1)$-constellation $(S_0,L_0)$ in $G$. Then there exists an $a$-alignment $(S,L,\pi)$ in $G$ with $S\subseteq S_{0}$ and $L\subseteq L_{0}$.
\end{lemma}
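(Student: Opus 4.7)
The plan is to apply Lemma~\ref{lem:ast_to_alignment} once with the parameters $s:=dh$ and $l:=2cdh$, and then use the $(c,h)$-pinched hypothesis to rule out the plain-constellation outcome. A direct computation gives $a^{l-1}(s+d(l-1))=a^{2cdh-1}(dh+d(2cdh-1))=a^{2cdh-1}d(h+2cdh-1)$, matching the input size of $(S_0,L_0)$; so Lemma~\ref{lem:ast_to_alignment} returns either the sought $a$-alignment in $G$ (with $S\subseteq S_0$ and $L\subseteq L_0$), in which case we are done, or a plain $d$-meager $(dh,2cdh)$-constellation $\mf{c}$ in $G$ whose $2cdh$ paths are pairwise disjoint, anticomplete subpaths of $L_0$.

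In the latter case, the aim is to contradict $(c,h)$-pinchedness by exhibiting $c$ induced cycles of length at least $h+2$ through a common vertex, pairwise disjoint and anticomplete outside that vertex. For every $L\in\mca{L}_{\mf{c}}$ and every $y\in S_{\mf{c}}$, let $I_y^L$ denote the smallest subpath of $L$ containing $N_L(y)$; the resulting $dh$ intervals on $L$ form a perfect interval graph. Hence on each $L$, either the maximum independent set has size at least $a$---producing $a$ pairwise disjoint intervals which, ordered along $L$, form an $a$-alignment $(S,L,\pi)$ with $L\subseteq L_0$, giving the desired conclusion---or its maximum clique has size at least $\lceil dh/(a-1)\rceil$; in the latter case, by Helly's theorem for intervals there is a vertex $v_L\in V(L)$ lying in at least $\lceil dh/(a-1)\rceil$ of the $I_y^L$'s, and by $d$-meagerness, $v_L$ is strictly interior to $I_y^L$ for at least $\lceil dh/(a-1)\rceil-d$ choices of $y$.

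If the clique alternative holds on every $L\in\mca{L}_{\mf{c}}$, I would average over the $2cdh$ paths to find some $y^*\in S_{\mf{c}}$ for which the strict-interiority condition is met on at least $c$ paths $L_1',\ldots,L_c'\in\mca{L}_{\mf{c}}$. For each $L_j'$, the consecutive $y^*$-neighbors $a_j,b_j$ on $L_j'$ flanking $v_{L_j'}$ yield an induced cycle $C_j=y^*\dd a_j\dd \cdots \dd b_j\dd y^*$ through $y^*$ with middle subpath $a_j\dd L_j'\dd b_j$. By the plainness of $\mca{L}_{\mf{c}}$, these $c$ cycles share only $y^*$ and are pairwise anticomplete outside $y^*$. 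The hard part will be ensuring each $C_j$ has length at least $h+2$, equivalently, that each $y^*$-gap containing $v_{L_j'}$ has length at least $h$: this calls for a more refined Ramsey-style choice of $v_L$, for instance pigeonholing over length-$h$ blocks of each $L$ (whose existence follows from $|V(L)|\ge h$, a consequence of $d$-meagerness and $|S_{\mf{c}}|=dh$), so that the flanking $y^*$-neighbors are forced at distance at least $h/2$ from $v_L$ on each side. With this refinement, the $c$ cycles violate $(c,h)$-pinchedness, yielding the contradiction.
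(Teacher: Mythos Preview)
Your first step---applying Lemma~\ref{lem:ast_to_alignment} with $s=dh$, $l=2cdh$ and observing that outcome~(b) yields a plain $d$-meager $(dh,2cdh)$-constellation $\mf{c}$ whose paths are anticomplete subpaths of $L_0$---matches the paper exactly. The divergence, and the gap, is in how you handle outcome~(b).

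You try to build the $c$ long cycles entirely inside $V(L)\cup\{y^*\}$ for individual paths $L\in\mca{L}_{\mf{c}}$, by finding a $y^*$-gap of length at least $h$ around some pivot $v_L$. But nothing forces such a gap to exist: a vertex $y\in S_{\mf{c}}$ may have its neighbours spread along $L$ at spacing far less than $h$, so every $y$-gap on $L$ is short, and this is perfectly consistent with $d$-meagerness. Your suggested ``refinement'' of placing $v_L$ in the middle of a length-$h$ block does not help, since $y^*$ can still have neighbours arbitrarily close to $v_L$ on both sides. (There is a second, quantitative problem: your averaging step needs $\lceil dh/(a-1)\rceil-d\geq 1$, which fails once $a>h$; and your interval-graph dichotomy is redundant with Lemma~\ref{lem:ast_to_alignment} anyway.)

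The paper's proof avoids this by never looking for a long gap of a single vertex inside a single $L_i$. Instead it exploits the linear order of the $L_i$ along $L_0$: for each $L_i$ it takes the shortest terminal segment $R_i$ meeting every vertex of $S_{\mf{c}}$, which by $d$-meagerness has $|R_i|\geq h$ automatically, and picks $x_i\in S_{\mf{c}}$ whose only neighbour in $R_i$ is its far end $w_i$. Pigeonhole on the $x_i$ over $2cdh$ paths gives a common $x$ and interleaved indices $i_1<j_1<\cdots<i_c<j_c$; the cycle $C_k$ then runs from $x$ to $w_{i_k}$, along $L_0$ through all of $R_{i_k}$ and past $v_{i_k}$ toward $L_{j_k}$ until the first neighbour of $x$, and back to $x$. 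The length bound comes for free from $|R_{i_k}|\geq h$, and disjointness/anticompleteness from the ordering along $L_0$. The essential idea you are missing is that the long segment is obtained not as a gap of one vertex but as a stretch that \emph{every} vertex of $S_{\mf{c}}$ must touch, and the cycle is allowed to leave $L_{i_k}$ and continue along $L_0$.
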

\begin{proof}
    Suppose not. Then applying Lemma~\ref{lem:ast_to_alignment} to $(S_0,L_0)$, it follows that there exists a plain $(dh,2cdh)$-constellation  $\mf{c}$ in $G$ such that $S_{\mf{c}}\subseteq S_{0}$ and $L\subseteq L_{0}$ for every $L\in \mca{L}_{\mf{c}}$. In particular, $\mf{c}$ is $d$-meager. Let $u$ be an end of $L_{0}$. Let $\mca{L}_{\mf{c}}=\{L_i:i\in \poi_{2cdh}\}$ and let $u_i,v_i$ be the ends of $L_i$ for each $i\in \poi_{2cdh}$, such that traversing $L_0$ starting at $u$, the vertices $u_1,v_1,\ldots, u_{2cdh},v_{2cdh}$ appear on $L_{0}$ in this order. For each $i\in \poi_{2cdh-1}$, let $v'_i$ be the neighbor of $v_i$ in $L_0\setminus L_i$.  Since $\mf{c}$ is plain, it follows that $v_i\dd L_0\dd u_{i+1}$ is a path of length at least two in $L_0$ whose interior is disjoint from $\{u_i:i\in \poi_{2cdh}\}\cup \{v_i:i\in \poi_{2cdh}\}$ and contains $v'_i$.
    
    For every $i\in \poi_{2cdh}$, since $\mf{c}$ is a constellation, every vertex in $S_{\mf{c}}$ has a neighbor in $L_i$. Let $R_i$ be the shortest path in $L_i$ containing $v_i$ such that every vertex in $S_{\mf{c}}$ has a neighbor in $R_i$. Since $\mf{c}$ is $d$-meager and $|S_{\mf{c}}|=dh$, it follows that $|R_i|\geq h$ for all $i\in \poi_{2cdh}$. Let $w_i$ be the end of $R_i$ distinct from $v_i$. Then the minimality of $R_i$ implies that there exists a vertex $x_i\in S_{\mf{c}}$ that is adjacent to $w_i$ and anticomplete to $R_i\setminus \{w_i\}$. Since $|S_{\mf{c}}|=dh$, it follows that there exist $x\in S_0$ as well as two disjoint $c$-subsets $\{i_k:k\in \poi_c\}$ and $\{j_k:k\in \poi_c\}$ of $\poi_{2cdh}$, such that $i_1<j_1<\cdots<i_{c}<j_{c}$ and we have $x_{i_k}=x_{j_k}=x$ for all $k\in \poi_c$.
    
    For each $k \in \poi_c$, traversing $v'_{i_{k}}\dd L_0\dd w_{j_k}$ starting at $v'_{i_{k}}$, let $w'_k$ be the first neighbor of $x$ in $v'_{i_{k}}\dd L_0\dd w_{j_k}$. Since $x$ is adjacent to $w_{j_k}$, it follows that $w'_k$ exists (See Figure~\ref{fig:constpinch}). Let $C_k = x\dd  w_{i_k} \dd L_{0} \dd w'_k \dd x$. Then $C_k$ is a cycle of length at least $h + 2$ in $G$, as $R_k \cup \{v'_{i_k}\} \subseteq C_k$ and $|R_k| \geq h$. 
  \begin{figure}[t!]
  \centering
  \includegraphics[scale=0.7]{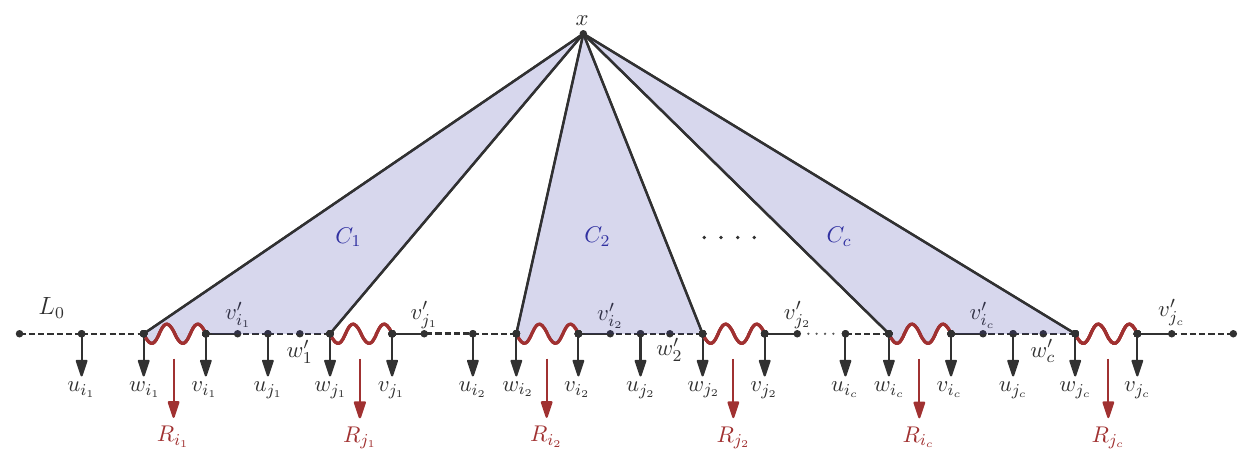}
  \captionof{figure}{Proof of Lemma~\ref{lem:pinchedalignment} (dashed lines depict paths of arbitrary length, and highlighted paths have length at least $h$).}
  \label{fig:constpinch}
  \end{figure}
 Now $C_1,\ldots, C_c$ are $c$ cycles of length at least $h+2$ in $G$ with $C_1\cap\cdots\cap C_c=\{x\}$. Also, since $C_1\setminus \{x\}, \ldots, C_c\setminus \{x\}\subseteq L$ are contained in pairwise distinct component of $L_{0} \setminus \{v'_{j_k}: k \in \poi_c\}$, it follows that $C_1\setminus \{x\}, \ldots, C_c\setminus \{x\}\subseteq L$ are pairwise disjoint and anticomplete in $G$. This violates the assumption that $G$ is $(c,h)$-pinched, hence completing the proof of Lemma~\ref{lem:pinchedalignment}.
\end{proof}

We also need the following quantified version of Ramsey's Theorem. This has appeared in several references; see, for instance, \cite{eta}.

\begin{theorem}[Ramsey \cite{multiramsey}, see also \cite{eta}]\label{classicalramsey}
For all $c,s\in \poi$, every graph $G$ on at least $c^s$ vertices contains either a clique of cardinality $c$ or a stable set of cardinality $s$. 
\end{theorem}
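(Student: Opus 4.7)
The plan is to prove this by simultaneous induction on $c$ and $s$. The base cases are $c=1$ and $s=1$: in either case $c^s \geq 1$, and any single vertex of $G$ is both a clique of size $1$ and a stable set of size $1$, so the statement trivially holds.

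For the inductive step, suppose $c, s \geq 2$ and $|V(G)| \geq c^s$. The key is to extract from $G$ a large induced subgraph in which one coordinate of the Ramsey parameters has dropped by one. To that end I would fix any vertex $v \in V(G)$, and let $N$ and $\overline{N}$ denote its set of neighbors and of non-neighbors (in $V(G)\setminus\{v\}$), respectively. I claim that either $|\overline{N}| \geq c^{s-1}$ or $|N| \geq (c-1)^{s}$. Indeed, if both inequalities failed, we would have
\[
|V(G)| - 1 \;=\; |N| + |\overline{N}| \;\leq\; (c-1)^{s} + c^{s-1} - 2,
\]
contradicting $|V(G)| \geq c^s$, since $c^{s} - c^{s-1} = c^{s-1}(c-1) \geq (c-1)^{s}$ (which reduces to $c^{s-1} \geq (c-1)^{s-1}$).

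In the first case, the induction hypothesis applied to $G[\overline{N}]$ with parameters $(c, s-1)$ produces either a clique of size $c$ in $G$ (done), or a stable set of size $s-1$ in $G[\overline{N}]$, which together with $v$ gives a stable set of size $s$ in $G$. In the second case, the induction hypothesis applied to $G[N]$ with parameters $(c-1, s)$ produces either a stable set of size $s$ in $G$ (done), or a clique of size $c-1$ in $G[N]$, which together with $v$ gives a clique of size $c$ in $G$. Either alternative finishes the induction.

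The entire argument is routine; there is no genuine obstacle. The only step that requires any verification is the arithmetic inequality $c^{s} \geq c^{s-1} + (c-1)^{s}$ used to justify the pigeonhole split, and this follows immediately from the monotonicity $c^{s-1} \geq (c-1)^{s-1}$.
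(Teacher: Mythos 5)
Your argument is correct. Note, though, that the paper does not actually prove Theorem~\ref{classicalramsey}; it is stated as a known quantified form of Ramsey's theorem and attributed to the cited references, so there is no in-paper proof to compare against. What you have written is a clean, self-contained derivation of the stated bound.

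A few remarks on the details, all of which check out. The base cases ($c=1$ or $s=1$) are immediate since a single vertex is both a clique of size $1$ and a stable set of size $1$. The inductive step uses the asymmetric pigeonhole split: if neither $|\overline{N}|\geq c^{s-1}$ nor $|N|\geq (c-1)^s$, then $|V(G)|\leq c^{s-1}+(c-1)^s-1<c^s$, which contradicts the hypothesis once one verifies $c^s\geq c^{s-1}+(c-1)^s$; this reduces, after factoring out $c-1>0$, to the obvious $c^{s-1}\geq(c-1)^{s-1}$. Applying the induction hypothesis to $G[\overline{N}]$ with parameters $(c,s-1)$ and absorbing $v$ into the stable set (valid since $v$ has no neighbors in $\overline{N}$), or to $G[N]$ with parameters $(c-1,s)$ and absorbing $v$ into the clique, completes the argument. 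This is precisely the standard Erd\H{o}s--Szekeres-style induction giving $R(c,s)\leq R(c,s-1)+R(c-1,s)$, specialized to the particular exponential bound $c^s$ used by the paper, and the asymmetric choice of thresholds $c^{s-1}$ and $(c-1)^s$ is the right one to make the bound close under this recurrence.
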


From Theorem~\ref{classicalramsey}, we deduce that:

\begin{lemma}\label{lem:meagerconst}
  Let $l,s,t\in \poi$, let $G$ be a graph and let $\mf{c}$ be a $(s,l+(st)^{t})$-constellation in $G$. Then one of the following holds.
\begin{enumerate}[\rm (a)]
    \item\label{lem:meagerconst_a} $G$ contains $K_t$ or $K_{t,t}$. 
        \item\label{lem:meagerconst_b} There exists $\mca{L}\subseteq 
    \mca{L}_{\mf{c}}$ with $|\mca{L}|=l$ such that $(S_{\mf{c}},\mca{L})$ is a $t$-meager $(s,l)$-constellation in $G$.
    \end{enumerate}
\end{lemma}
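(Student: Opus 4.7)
The plan is to partition $\mca{L}_{\mf{c}}$ into \emph{good} and \emph{bad} paths: call $L\in \mca{L}_{\mf{c}}$ good if every vertex of $L$ has at most $t$ neighbors in $S_{\mf{c}}$, and bad otherwise. If at least $l$ paths are good, I pick any $l$ of them to form the subset $\mca{L}$; the restriction $(S_{\mf{c}},\mca{L})$ is then a $t$-meager $(s,l)$-constellation, and outcome (b) holds. So I may assume that more than $|\mca{L}_{\mf{c}}|-l=(st)^t$ paths are bad, and the goal becomes to build $K_t$ or $K_{t,t}$.

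Next, for each bad path $L$, I choose a vertex $v_L\in V(L)$ together with a $t$-subset $T_L\subseteq S_{\mf{c}}\cap N_G(v_L)$, which exists by badness. Since there are at most $\binom{s}{t}\leq s^t$ choices for a $t$-subset of $S_{\mf{c}}$, the pigeonhole principle applied to the more-than-$(st)^t$ bad paths yields a fixed $t$-subset $T\subseteq S_{\mf{c}}$ and a subfamily $M$ of more than $(st)^t/s^t=t^t$ bad paths with $T_L=T$ for every $L\in M$. In particular, each $v_L$ with $L\in M$ is adjacent in $G$ to every vertex of $T$.

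Then I apply Theorem~\ref{classicalramsey} to the set $U=\{v_L:L\in M\}$, whose cardinality exceeds $t^t$. The theorem produces either a clique of size $t$ inside $U$, in which case $G$ contains $K_t$ and outcome (a) holds, or a stable set $\{v_{L_1},\ldots,v_{L_t}\}\subseteq U$ of size $t$. In the latter case, $T$ itself is a stable set of size $t$ (since $T\subseteq S_{\mf{c}}$ and $S_{\mf{c}}$ is stable by the definition of a constellation), and $T$ is disjoint from $\{v_{L_1},\ldots,v_{L_t}\}\subseteq V(\mca{L}_{\mf{c}})$ because $S_{\mf{c}}\cap V(\mca{L}_{\mf{c}})=\varnothing$ for any constellation. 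Together with the fact that each $v_{L_i}$ is complete to $T$, this shows that $G[T\cup\{v_{L_1},\ldots,v_{L_t}\}]$ is isomorphic to $K_{t,t}$, so outcome (a) holds again.

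There is no real obstacle here beyond bookkeeping of the Ramsey and pigeonhole thresholds; the only points deserving care are (i) making sure the $t$-subsets $T_L$ are well-defined (badness gives more than $t$ neighbors, so a $t$-subset exists), and (ii) verifying the disjointness of $T$ and $\{v_{L_i}\}$ so that the resulting $K_{t,t}$ is induced, both of which are immediate from the constellation definition.
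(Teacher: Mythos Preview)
Your proof is correct and follows essentially the same approach as the paper: both arguments pigeonhole over the $t$-subsets $T_L\subseteq S_{\mf{c}}$ to find many vertices $v_L$ complete to a common $T$, then apply Theorem~\ref{classicalramsey} to $\{v_L\}$ to extract either $K_t$ or a stable set yielding $K_{t,t}$. The only cosmetic difference is that the paper phrases it as a contrapositive (assuming (a) fails and bounding the number of non-meager paths), whereas you argue directly.
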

\begin{proof}
   Suppose that Lemma~\ref{lem:meagerconst}\ref{lem:meagerconst_a} does not hold. For every $L\in \mca{L}_{\mf{c}}$, let $t_{L}$ be the largest number in $\poi_s$ for which some vertex $u_L\in L$ has at least $t_{L}$ neighbors in $S_{\mf{c}}$. It follows that:

    \sta{\label{st:fewtL} We have $|\{L\in \mca{L}_{\mf{c}}:t_L\geq t\}|<(st)^{t}$.}

    Suppose not. Let $\mca{S}\subseteq \{L\in \mca{L}_{\mf{c}}:t_L\geq t\}$ with $|\mca{S}|=(st)^{t}$. Then for every $L\in \mca{S}$, we may choose $u_L\in L$ and $T_L\subseteq S_{\mf{c}}$ such that $|T_L|=t$ and $u_L$ is complete to $T_L$ in $G$. Since $|S_{\mf{c}}|=s$, it follows that there exist $T\subseteq S_{\mf{c}}$ and $\mca{T}\subseteq \mca{S}$ with $|T|=t$ and $|\mca{T}|=t^t$,  such that for every $L\in \mca{T}$, we have $T_L=T$. Let $U=\{u_L:L\in \mca{T}\}$. Then $T$ and $U$ are disjoint and complete in $G$. Also, since $G$ is $K_t$-free and $|U|=t^t$, it follows from Theorem~\ref{classicalramsey} that there is a stable set $U'\subseteq U$ in $G$ with $|U'|=t$. But then $G[T\cup U']$ is isomorphic to $K_{t,t}$, a contradiction. This proves \eqref{st:fewtL}.

    \medskip

     Now the result is immediate from \eqref{st:fewtL} and the fact that $|\mca{L}_{\mf{c}}|=l+t^ts^{t}$. This completes the proof of Lemma~\ref{lem:meagerconst}. 
    \end{proof}

We are now ready to prove Theorem~\ref{thm:dealwithconst}.

\begin{proof}[Proof of Theorem~\ref{thm:dealwithconst}]
We claim that:
$$\sigma=\sigma(c,h,s,t)=s^{2cht-1}t(h+2cht-1);$$
$$\lambda=\lambda(c,h,s,t)=cs(s!)\sigma^s+(\sigma t)^t$$
satisfy Theorem~\ref{thm:dealwithconst}. Let $G$ be a $t$-clean $(c,h)$-pinched graph which does not contain $K_t$ and $K_{t,t}$. Let $\mf{c}$ be a plain $(\sigma,\lambda)$-constellation in $G$. By the choice of $\sigma$ and $\lambda$, we may apply Lemma~\ref{lem:meagerconst} to $\mf{c}$, and deduce that Lemma~\ref{lem:meagerconst}\ref{lem:meagerconst_b} holds, that is, there exists $\mca{L}_0\subseteq \mca{L}_\mf{c}$ with $|\mca{L}_0|=cs(s!)\sigma^s$ such that for every $L\in \mca{L}_0$, $(S_{\mf{c}},L)$ is a $t$-meager, plain $(\sigma,1)$-constellation in $G$. In particular, since $\sigma=s^{2cht-1}t(h+2cht-1)$,  we may apply Lemma~\ref{lem:pinchedalignment} to $(S_{\mf{c}},L)$ to show that:

\sta{\label{st:getalignment} For every $L\in \mca{L}_0$, there exists an $s$-alignment $(S_L,Q_L,\pi_L)$ in $G$ with $S_{L}\subseteq S_{\mf{c}}$ and $Q_{L}\subseteq L$.}

Note that $|S_{L}|=s$ for all $L\in \mca{L}_0$. Recall also that $|S_{\mf{c}}|=\sigma$ and $|\mca{L}_0|=cs(s!)\sigma^s$. This, along with a pigeon-hole argument, implies immediately that:

\sta{\label{st:almostarray}There exists $S\subseteq S_{\mf{c}}$ with $|S|=s$, and $\mca{L}\subseteq \mca{L}_0$ with $|\mca{L}|=cs$ and $\pi:\poi_s\rightarrow S$ such that for every $L\in \mca{L}$, $(S,Q_L,\pi)$ is an $s$-alignment in $G$.}

We further deduce that:

\sta{\label{st:usehollow}There exists $\mca{S}\subseteq \mca{L}$ with $|\mca{S}|=s$ such that for every $L\in \mca{S}$, the $(s,1)$-constellation $(S,Q_L)$ is $h$-hollow.}

To see this, since $|S|=s$ and $|\mca{L}|=cs$, it suffices to show that for every $x\in S$, there are fewer than $c$ paths $L\in \mca{L}$ for which there is an $x$-gap in $(S,Q_L)$ of length at least $h$. Suppose for a contradiction that for some $x\in S$, there are $c$ distinct paths $L_1,\ldots, L_c\in \mca{L}$ such that for every $i\in \poi_c$, there is an $x$-gap $P_i$ in $(S,Q_{L_i})$ of length at least $h$; let $y_i,z_i$ be the ends of $P_i$. It follows that $C_i=x\dd y_i\dd P_i\dd z_i\dd x$ is a cycle of length at least $h+2$ in $G$. Now $C_1,\ldots, C_c$ are $c$ cycles of length at least $h+2$ in $G$ with $C_1\cap\cdots\cap C_c=\{x\}$. Moreover, since $\mf{c}$ is plain, $C_1\setminus \{x\}, \ldots, C_c\setminus \{x\}\subseteq V(\mathcal{L})$ are pairwise disjoint and anticomplete in $G$. This violates the assumption that $G$ is $(c,h)$-pinched, and so proves \eqref{st:usehollow}.

\medskip

Let $\mf{a}=(S,\mca{S})$ where $S$ comes from \eqref{st:almostarray} and $\mca{S}$ comes from \eqref{st:usehollow}. Also, let $\pi$ be as in \eqref{st:almostarray}. It follows from \eqref{st:almostarray} and \eqref{st:usehollow} that $\mf{a},\pi$ satisfies \ref{AR}. Hence, $\mf{a}$ is an $(s,h)$-array in $G$. This completes the proof of Theorem~\ref{thm:dealwithconst}.
\end{proof}

\section{Obtaining a plain constellation}\label{sec:obtain}
This section contains the main ingredient of the proof of Theorem~\ref{mainthmgeneral}:    
\begin{theorem}\label{thm:block_to_constellation}
For all $c,h,l,s,t\in \poi$, there is a constant $
       \Omega=\Omega(c,h,l,s,t)\in \poi$ with the following property. Let $G$ be a $(c,h)$-pinched graph. Assume that there exists a strong $\Omega$-block in $G$. Then one of the following holds.
       \begin{enumerate}[\rm (a)]
    \item\label{thm:block_to_constellation_a} $G$ contains either $K_t$ or $K_{t,t}$.
    \item\label{thm:block_to_constellation_b} There exists a plain $(s,l)$-constellation in $G$.
      \end{enumerate}
  \end{theorem}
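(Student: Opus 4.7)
The plan is to iteratively refine the given strong $\Omega$-block $(B,\mca{P})$, via Ramsey-style arguments and the $(c,h)$-pinched hypothesis, until either a basic obstruction ($K_t$ or $K_{t,t}$) or a plain $(s,l)$-constellation emerges. Choose $\Omega$ large enough in terms of $c,h,l,s,t$. Fix a root $b_0 \in B$, and for each $b \in B \setminus \{b_0\}$ pick one representative path $P_b \in \mca{P}_{\{b_0,b\}}$. By the strong block property the resulting \emph{petals} are pairwise internally disjoint and meet only at $b_0$; a preliminary pigeonhole on path lengths reduces to a large subset $B_1 \subseteq B \setminus \{b_0\}$ where all $P_b$ have comparable length.

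I would next apply Ramsey (Theorem~\ref{classicalramsey}) to the auxiliary graph on $B_1$ whose edges record the presence of an edge in $G$ between the interiors $P_b^*$ and $P_{b'}^*$, extracting a still-large $B_2 \subseteq B_1$ on which the interaction pattern is uniform. In the \emph{sparse case} (no edges between any two petal interiors), the petals $P_b \setminus \{b_0\}$ for $b \in B_2$ are pairwise disjoint and anticomplete, so any $l$ of them supply a plain $l$-polypath. In the \emph{dense case} (every pair has at least one cross-edge), the cross-edges combined with the remaining $\Omega - 1$ paths in each bundle $\mca{P}_{\{b_0,b\}}$ produce many cycles through $b_0$ of controlled length, pairwise vertex-disjoint outside $b_0$; the $(c,h)$-pinched hypothesis forbids $c$ of them from being pairwise anticomplete, forcing further structure. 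A sufficiently rigid version of this (after additional Ramsey passes) should yield a $(\leq d)$-subdivision of $K_m$ for $m$ arbitrarily large, at which point Theorem~\ref{dvorak} --- together with an exclusion of induced subdivisions extracted from pinched-ness --- delivers $K_t$ or $K_{t,t}$.

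The remaining and hardest step is, in the sparse case, to construct an $s$-vertex stable set $S_{\mf{c}}$ disjoint from the chosen polypath such that every $x \in S_{\mf{c}}$ has a neighbor in every chosen petal. The strong block does not supply these adjacencies automatically: if $b \in B_2$ is not a petal endpoint, then paths in $\mca{P}_{\{b_0,b\}}$ meet the chosen petals only at $b_0$. I would attempt to overcome this by using more than one path per bundle and running an additional Ramsey-and-pigeonhole layer on how these alternative paths see the fixed polypath --- ultimately selecting the stable set so that each of its members connects to each chosen petal via a consistently located cross-edge, before thinning to genuine non-adjacency by one last Ramsey step. This construction should mirror the corresponding step in the parallel work on perforated graphs \cite{twix}.
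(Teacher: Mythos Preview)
Your outline captures the right \emph{flavor} (Ramsey plus Dvo\v{r}\'ak--Lozin plus the pinched hypothesis), but both branches of your sparse/dense dichotomy have real gaps, and the paper's actual proof is organized quite differently.

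In your \emph{sparse case} you obtain a plain $l$-polypath from the anticomplete petals, but then you need an $s$-vertex stable set $S_{\mf c}$ each of whose members sees every chosen petal. You acknowledge this is the hardest step and do not actually carry it out. The difficulty is genuine: by the strong-block disjointness, the paths in $\mca P_{\{b_0,b'\}}$ for $b'\notin B_2$ are vertex-disjoint from your petals except at $b_0$, and nothing forces them to have \emph{any} neighbor in a petal interior. ``Running an additional Ramsey-and-pigeonhole layer'' does not manufacture adjacencies that are simply absent. In your \emph{dense case}, cross-edges between petal interiors give cycles through $b_0$, but their lengths are uncontrolled (your ``comparable length'' pigeonhole does not bound lengths), so you do not get a $(\leq d)$-subdivision of $K_m$; the jump from ``pinched forbids $c$ anticomplete cycles'' to ``hence a short subdivision'' is a non-sequitur as written.

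The paper proceeds differently. The first Ramsey is on \emph{path lengths}: on $B$ one forms the auxiliary graph whose edges are pairs with a path of length at most $h$, so a large clique immediately yields a $(\leq h-1)$-subdivision of $K_m$ and Theorem~\ref{dvorak} applies (a $(c,h)$-pinched graph contains no induced subdivision of the ``$c$ long cycles through a vertex'' graph $H$). Otherwise one gets a large stable set $\{x,y_1,\dots,y_\beta\}$ with all paths long, packages each bundle $\mca P_{\{x,y_i\}}$ as an $(h{-}1,\eta)$-\emph{patch}, and invokes the patch/match lemma (Lemma~\ref{lem:digraph}) from \cite{twix}; the match outcome is ruled out by pinchedness (a plain $(2h,c)$-match for $N_G(x)$ gives $c$ long anticomplete cycles through $x$), leaving $\beta$ plain patches. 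These are fed into the bundle lemma (Lemma~\ref{lem:bundle}) and then the polypath lemma (Lemma~\ref{lem:polypathvspolypath}); the resulting dichotomy is resolved not by a subdivision argument but by extracting an alignment via Lemma~\ref{lem:pinchedalignment} and then building $c$ long anticomplete cycles through $x$ by hand. In short, the heavy lifting is done by the three imported lemmas from \cite{twix}, and the pinched hypothesis is used at three separate junctures rather than once; your single sparse/dense split on petal interiors does not capture this structure.
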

Our road to the proof of Theorem~\ref{thm:block_to_constellation} passes through a number of definitions and lemmas from \cite{twix}, beginning with two useful Ramsey-type results for polypaths:

\begin{lemma}[Alecu, Chudnovsky, Hajebi, Spirkl; see Lemma 5.2 in  \cite{twix}]\label{lem:bundle}
   \sloppy For all $b,f,g,m,n,t\in \poi$ and $s\in \poi\cup \{0\}$, there is a constant $\beta=\beta(b,f,g,m,n,s,t)\in \poi$ with the following property. Let $G$ be a graph and let $\mf{B}$ be a collection of $\beta$ pairwise disentangled $(b,2b(g-1)+f)$-bundles in $G$. Then one of the following holds.
    \begin{enumerate}[\rm (a)]
        \item\label{lem:bundle_a} $G$ contains either $K_t$ or $K_{t,t}$.
        \item\label{lem:bundle_b} There exists  $\mf{N}\subseteq \mf{B}$ with $|\mf{N}|=n$ as well as $S\subseteq \bigcup_{\mf{b}\in \mf{B}\setminus \mf{N}} S_{\mf{b}}$ with $|S|=s$, such that for every $\mf{b}\in \mf{N}$, there exists $\mca{G}_{\mf{b}}\subseteq \mca{L}_{\mf{b}}$ with $|\mca{G}_{\mf{b}}|=g$ for which $(S, \mca{G}_{\mf{b}})$ is an $(s,g)$-constellation in $G$.
        \item\label{lem:bundle_c} There exists  $\mf{M}\subseteq \mf{B}$ with $|\mf{M}|=m$ as well as $\mca{F}_{\mf{b}}\subseteq \mca{L}_{\mf{b}}$ with $|\mca{F}_{\mf{b}}|=f$ for each $\mf{b}\in \mf{M}$, such that for all distinct $\mf{b},\mf{b}'\in \mf{M}$, $S_{\mf{b}}$ is anticomplete to $S_{\mf{b}'}\cup V(\mca{F}_{\mf{b}'})$ in $G$.
    \end{enumerate}
\end{lemma}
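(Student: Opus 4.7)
The plan is a layered Ramsey-style reduction on $\mathfrak{B}$. For distinct bundles $\mathfrak{b},\mathfrak{b}'\in\mathfrak{B}$ set $N(x,\mathfrak{b}')=\{L\in\mathcal{L}_{\mathfrak{b}'}:x\text{ has a neighbor in }L\}$ and define the \emph{touch set} $T_{\mathfrak{b},\mathfrak{b}'}=\bigcup_{x\in S_\mathfrak{b}}N(x,\mathfrak{b}')\subseteq\mathcal{L}_{\mathfrak{b}'}$. Call the ordered pair \emph{heavy} if some $x\in S_\mathfrak{b}$ has $|N(x,\mathfrak{b}')|\geq g$ and \emph{light} otherwise; in the light case $|T_{\mathfrak{b},\mathfrak{b}'}|\leq b(g-1)$. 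A first coloring of unordered pairs of $\mathfrak{B}$ records the heavy/light status in both directions together with whether $S_\mathfrak{b}$ is anticomplete to $S_{\mathfrak{b}'}$; with $\beta$ large enough, Theorem~\ref{classicalramsey} yields a monochromatic subfamily $\mathfrak{B}_1\subseteq\mathfrak{B}$ of whatever size the subsequent steps require.

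If at least one direction of every pair in $\mathfrak{B}_1$ is heavy, I orient pairs toward heaviness, pick a bundle $\mathfrak{b}_1^\star$ with many heavy out-neighbors in the resulting tournament, and use pigeonhole over $|S_{\mathfrak{b}_1^\star}|=b$ to isolate a single vertex $x_1\in S_{\mathfrak{b}_1^\star}$ that is a heavy witness for many bundles. I then iterate $s$ times with the strengthened heaviness $|N(x,\mathfrak{b}')\cap\bigcap_{j<i}N(x_j,\mathfrak{b}')|\geq g$ at step $i$, each time removing the chosen source bundle from the pool. Either the extraction succeeds and I assemble $S=\{x_1,\ldots,x_s\}\subseteq\bigcup_{\mathfrak{b}\in\mathfrak{B}\setminus\mathfrak{N}}S_\mathfrak{b}$ together with a large target family $\mathfrak{N}$ on which $|\bigcap_iN(x_i,\mathfrak{b}')|\geq g$, delivering outcome~(b); or the strengthening fails at some step and the witnesses have densely overlapping neighborhoods on few paths, a configuration which after a further Ramsey cleanup combined with Theorem~\ref{dvorak} produces $K_t$ or $K_{t,t}$, i.e.\ outcome~(a).

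If instead all pairs in $\mathfrak{B}_1$ are light in both directions with stable sets pairwise anticomplete, then for each fixed $\mathfrak{b}\in\mathfrak{B}_1$ the sets $T_{\mathfrak{b}',\mathfrak{b}}\subseteq\mathcal{L}_\mathfrak{b}$ (as $\mathfrak{b}'$ varies) lie in a palette of bounded size $K=K(b,f,g)$, since each has cardinality at most $b(g-1)$ inside the fixed set $\mathcal{L}_\mathfrak{b}$ of cardinality $2b(g-1)+f$. A nested pigeonhole/Ramsey argument---fix $\mathfrak{b}$, find a most common value $T^\star_\mathfrak{b}$ of $T_{\mathfrak{b}',\mathfrak{b}}$, then apply Theorem~\ref{classicalramsey} to the auxiliary graph whose edge $\{\mathfrak{b},\mathfrak{b}'\}$ records that $T_{\mathfrak{b}',\mathfrak{b}}=T^\star_\mathfrak{b}$ and $T_{\mathfrak{b},\mathfrak{b}'}=T^\star_{\mathfrak{b}'}$---extracts $\mathfrak{B}_2\subseteq\mathfrak{B}_1$ with $T_{\mathfrak{b}',\mathfrak{b}}=T^\star_\mathfrak{b}$ for all distinct $\mathfrak{b},\mathfrak{b}'\in\mathfrak{B}_2$. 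Let $\mathfrak{M}$ be any $m$ bundles of $\mathfrak{B}_2$ and let $\mathcal{F}_\mathfrak{b}\subseteq\mathcal{L}_\mathfrak{b}\setminus T^\star_\mathfrak{b}$ consist of any $f$ paths, which is possible because $|\mathcal{L}_\mathfrak{b}\setminus T^\star_\mathfrak{b}|\geq 2b(g-1)+f-b(g-1)\geq f$. Then for distinct $\mathfrak{b},\mathfrak{b}'\in\mathfrak{M}$, $S_{\mathfrak{b}'}$ touches $\mathcal{L}_\mathfrak{b}$ only inside $T^\star_\mathfrak{b}$ and is therefore anticomplete to $V(\mathcal{F}_\mathfrak{b})$; anticompleteness of $S_\mathfrak{b}$ to $S_{\mathfrak{b}'}$ was secured in the first Ramsey step, so outcome~(c) holds.

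The crux is the heavy regime: preserving $|\bigcap_iN(x_i,\mathfrak{b}')|\geq g$ through $s$ iterations, and converting a failure of the strengthening into a subdivided-clique structure suitable for Theorem~\ref{dvorak}, is the delicate step and drives the tower of exponentials in $\beta$. The light regime, by contrast, is a clean sunflower-style pigeonhole, and the precise buffer $2b(g-1)$ in the bundle-path count $2b(g-1)+f$ is calibrated exactly to absorb one common forbidden touch set $T^\star_\mathfrak{b}$ while leaving the required $f$ good paths.
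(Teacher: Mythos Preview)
The paper does not prove Lemma~\ref{lem:bundle}; it is quoted from \cite{twix} without argument here, so there is no proof in the present paper to compare against. That said, your proposal has a genuine gap.

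Your light regime is essentially correct: once all ordered pairs are light and the sets $S_{\mathfrak b}$ are pairwise anticomplete, each touch set $T_{\mathfrak b',\mathfrak b}$ has size at most $b(g-1)$ inside a labelled universe of size $2b(g-1)+f$, so a multicolour Ramsey step on the pair of labelled touch sets freezes $T_{\mathfrak b',\mathfrak b}$ to a single $T^\star_{\mathfrak b}$ per target, and $|\mathcal L_{\mathfrak b}\setminus T^\star_{\mathfrak b}|\ge b(g-1)+f\ge f$ leaves room for $\mathcal F_{\mathfrak b}$, giving outcome~(c). (You do not spell out the sub-case where the $S_{\mathfrak b}$'s are pairwise non-anticomplete after the first Ramsey round; one further Ramsey step on the labelled endpoints of the witnessing edges produces an induced $K_t$, so this omission is minor.)

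The heavy regime, however, does not close as written. Your fallback when the ``strengthened heaviness'' extraction stalls is to manufacture a short subdivision of a large complete graph and invoke Theorem~\ref{dvorak}. But Theorem~\ref{dvorak} has as a hypothesis that $G$ contains no induced subdivision of a fixed graph $H$, and Lemma~\ref{lem:bundle} is stated for \emph{arbitrary} $G$ with no such restriction; there is nothing to feed into Theorem~\ref{dvorak} here. More basically, the claim that a failure at step $i$ yields ``densely overlapping neighborhoods on few paths'' is not substantiated: all that failure tells you is that for every $x$ in every surviving $S_{\mathfrak b}$ and every surviving target $\mathfrak b'$, fewer than $g$ of the paths in $\bigcap_{j<i}N(x_j,\mathfrak b')$ are hit by $x$---and since that intersection is only guaranteed to have size at least $g$, this is nearly vacuous and gives no dense configuration from which to extract $K_t$ or $K_{t,t}$. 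The heavy case needs a different organisation (for instance, freezing in the Ramsey step not just the heavy/light status but the actual labelled subset of $\mathcal L_{\mathfrak b'}$ that each witness hits, so that witnesses drawn from distinct bundles automatically share their target paths and no ``strengthening'' is needed); the iterative scheme with a Dvo\v{r}\'ak-style fallback cannot work in this setting.
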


\begin{lemma}[Alecu, Chudnovsky, Hajebi, Spirkl; see Lemma 5.3 in  \cite{twix}]\label{lem:polypathvspolypath}
    For all $a,g\in \poi$, there is a constant $\varphi=\varphi(a,g)\in \poi$ with the following property. Let $G$ be a graph and let $\mca{F}_1,\ldots, \mca{F}_a$ be a collection of $a$ pairwise disentangled $\varphi$-polypaths in $G$. Then for every $i\in \poi_a$,  there exists a $g$-polypath $\mca{G}_{i}\subseteq \mca{F}_i$, such that for all distinct $i,i'\in \poi_a$, either $V(\mca{G}_i)$ is anticomplete to $V(\mca{G}_{i'})$ in $G$, or for every $L\in \mca{G}_i$ and every $L'\in \mca{G}_{i'}$, there is an edge in $G$ with an end in $L$ and an end in $L'$.
\end{lemma}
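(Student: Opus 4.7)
The plan is to argue by iterated application of the bipartite Ramsey theorem. For each pair $\{i,i'\}$ with $1 \leq i < i' \leq a$, form an auxiliary bipartite graph $B_{i,i'}$ with parts $\mca{F}_i$ and $\mca{F}_{i'}$ in which $L \in \mca{F}_i$ is adjacent to $L' \in \mca{F}_{i'}$ exactly when some edge of $G$ has one end in $V(L)$ and the other end in $V(L')$. Since the $\mca{F}_i$ are pairwise disentangled, $B_{i,i'}$ is well-defined, and the conclusion of the lemma reduces to finding $g$-subsets $\mca{G}_i \subseteq \mca{F}_i$ (one per index $i$) such that, for every pair $\{i,i'\}$, the restriction of $B_{i,i'}$ to $\mca{G}_i \cup \mca{G}_{i'}$ is either complete bipartite (the ``edge'' case of the lemma) or edgeless (the ``anticomplete'' case).

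The key tool is the bipartite Ramsey theorem, which supplies a function $r : \poi \to \poi$ such that every bipartite graph with both parts of size at least $r(g)$ contains either a complete bipartite subgraph $K_{g,g}$ or an edgeless bipartite subgraph with both parts of size $g$. Set $p = \binom{a}{2}$ and define $\varphi = r^{(p)}(g)$, the $p$-fold iterate of $r$ applied to $g$. Fix an arbitrary enumeration $(i_1,i'_1),\ldots,(i_p,i'_p)$ of the pairs, initialize $\mca{H}_j^{(0)} = \mca{F}_j$ for every $j$, and at step $k = 1,\ldots,p$ apply the bipartite Ramsey theorem to $B_{i_k,i'_k}$ restricted to $\mca{H}_{i_k}^{(k-1)}\times\mca{H}_{i'_k}^{(k-1)}$. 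This yields subsets $\mca{H}_{i_k}^{(k)} \subseteq \mca{H}_{i_k}^{(k-1)}$ and $\mca{H}_{i'_k}^{(k)} \subseteq \mca{H}_{i'_k}^{(k-1)}$ of size exactly $r^{(p-k)}(g)$ that are either complete or anticomplete in $G$; set $\mca{H}_j^{(k)} = \mca{H}_j^{(k-1)}$ for the untouched indices $j \notin \{i_k,i'_k\}$. A straightforward induction maintains the invariant $|\mca{H}_j^{(k)}| \geq r^{(p-k)}(g)$ for every $j$: untouched indices keep their size $\varphi = r^{(p)}(g) \geq r^{(p-k)}(g)$, while the two touched indices are set to precisely $r^{(p-k)}(g)$. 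At termination, $|\mca{H}_j^{(p)}| \geq r^{(0)}(g) = g$ for every $j$.

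The only delicate point is to observe that the complete-or-anticomplete status established for the pair $\{i_k,i'_k\}$ at step $k$ persists under all subsequent restrictions, since passing to subsets of $\mca{H}_{i_k}^{(k)}$ and $\mca{H}_{i'_k}^{(k)}$ preserves both the ``all edges present'' and the ``no edges'' condition in $B_{i_k,i'_k}$. Hence selecting arbitrary $g$-subsets $\mca{G}_j \subseteq \mca{H}_j^{(p)}$ produces polypaths satisfying the lemma simultaneously for every pair. The main obstacle is purely bookkeeping through the iteration; the bipartite Ramsey input is classical, and can itself be deduced from a single application of Theorem~\ref{classicalramsey} after encoding the $2$-coloring of $B_{i,i'}$ as a coloring of sufficiently large tuples.
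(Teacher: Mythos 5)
Your proof is correct. The paper does not prove this lemma itself---it is imported verbatim as Lemma~5.3 from the cited companion paper \cite{twix}---so there is no in-text argument to compare against, but the iterated bipartite Ramsey argument you give is the standard (and almost certainly the intended) route. Your bookkeeping is sound: the invariant $|\mathcal{H}_j^{(k)}| \geq r^{(p-k)}(g)$ is preserved because the two touched sets at step $k$ start with size at least $r^{(p-k+1)}(g) = r\bigl(r^{(p-k)}(g)\bigr)$, which is exactly what bipartite Ramsey needs to return homogeneous parts of size $r^{(p-k)}(g)$, and untouched sets retain their (larger) size; the crucial monotonicity observation---that completeness or anticompleteness of the auxiliary bipartite graph between two families survives passing to arbitrary subfamilies---is stated explicitly and is what lets the $\binom{a}{2}$ conclusions hold simultaneously at the end. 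The only point I would tighten is the closing remark that bipartite Ramsey follows from ``a single application'' of the stated Theorem~\ref{classicalramsey}: the cleanest reduction to ordinary Ramsey uses a four-coloring of pairs (recording both $a_i b_j$ and $a_j b_i$), which is a multicolor variant rather than a literal single application of the two-color statement, though of course that too is elementary. This is a cosmetic matter; the proof as a whole is right.
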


We continue with a few definitions from \cite{twix}. Let $G$ be a graph, let $d\in \poi\cup \{0\}$ and let $r\in \poi$. For $X\subseteq V(G)$, by a \textit{$(d,r)$-patch for $X$ in $G$} we mean a $(1,r)$-bundle $\mf{p}$ in $G$ where:
\begin{enumerate}[(P1), leftmargin=19mm, rightmargin=7mm]
     \item\label{P1} $S_{\mf{p}}\subseteq V(G)\setminus V(\mca{L}_{\mf{p}})$;
    \item\label{P2} every path $L\in \mca{L}_{\mf{p}}$ has length at least $d$; and
    \item\label{P3} for every $L\in \mca{L}_{\mf{p}}$, one may write $\partial L=\{x_L,y_L\}$ such that $L\cap X=\{x_L\}$ and $N_L(S_\mf{p})=\{y_L\}$.
 \end{enumerate}
 Also, by a \textit{$(d,r)$-match for $X$ in $G$} we mean an $r$-polypath $\mca{M}$ in $G$ such that 

 \begin{enumerate}[(M1), leftmargin=19mm, rightmargin=7mm]
 \item\label{M1} every path $L\in \mca{M}$ has length at least $d$; and
 \item\label{M2} $V(\mca{M})\cap X=\partial \mca{M}$.
 \end{enumerate}
 \begin{figure}[t!]
  \centering
  \includegraphics[scale=0.8]{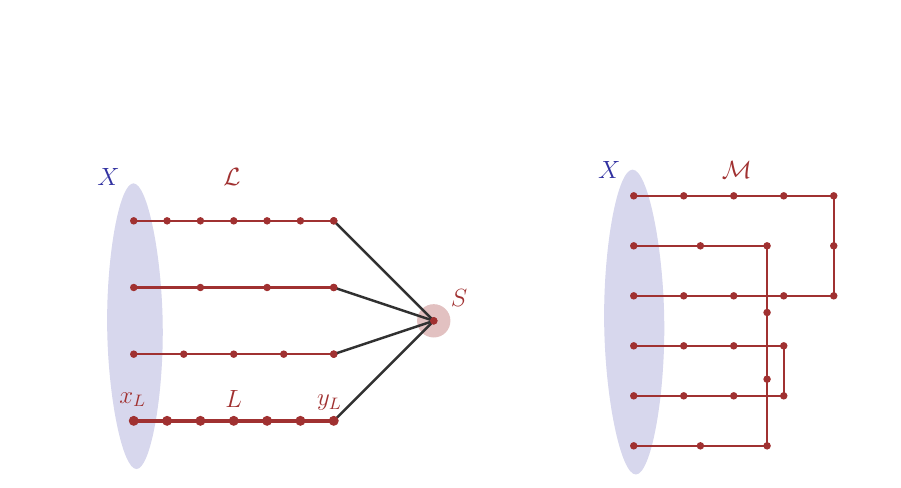}
  \captionof{figure}{A $(3,4)$-patch for $X$ (left) and a $(7,3)$-match for $X$ (right).}
  \label{fig:patch&match}
  \end{figure}
See Figure~\ref{fig:patch&match}. We also need:
 \begin{lemma}[Alecu, Chudnovsky, Hajebi, Spirkl; see Lemma 7.2 in \cite{twix}]\label{lem:digraph}
       \sloppy For all $d,l,m,r,r',s\in \poi$, there is a constant $
       \eta=\eta(d,l,m,r,r',s)\in \poi$ with the following property. Let $G$ be a graph, let $X\subseteq V(G)$ and let $\mf{p}$ be a $(d,\eta)$-patch for $X$ in $G$. Then one of the following holds.
    \begin{enumerate}[\rm (a)]
        \item\label{lem:digraph_a} There exists a plain $(s,l)$-constellation in $G$.
         \item\label{lem:digraph_b} $G$ contains a $(\leq d+1)$-subdivision of $K_m$ as a subgraph.
         \item\label{lem:digraph_c} There exists a plain $(2(d+1),r)$-match $\mca{M}$ for $X$ in $G$ such that $ V(\mca{M})\subseteq V(\mf{p})$.
        \item\label{lem:digraph_d} There exists a plain $(d,r')$-patch $\mf{q}$ for $X$ in $G$ such that $V(\mf{q}) \subseteq V(\mf{p})$.
    \end{enumerate}
  \end{lemma}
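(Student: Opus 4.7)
The plan is to convert the strong $\Omega$-block $(B,\mca{P})$ into a plain constellation by extracting, for each block vertex, a local patch; cleaning each via Lemma~\ref{lem:digraph}; and gluing the resulting plain objects together using Lemmas~\ref{lem:bundle} and~\ref{lem:polypathvspolypath}. The $(c,h)$-pinched hypothesis enters both globally, via Theorem~\ref{dvorak}, and locally, by forbidding configurations that would assemble $c$ pairwise disjoint and anticomplete induced cycles of length $\ge h+2$ through a single vertex.

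\emph{Step 1 (Dvo\v{r}\'ak reduction from pinching).} The bouquet $H_{c,h}$ formed by $c$ cycles of length $h+2$ identified at a single vertex has the property that a graph is $(c,h)$-pinched if and only if it contains no induced subdivision of $H_{c,h}$. Applying Theorem~\ref{dvorak} with this $H_{c,h}$ yields a constant $m_0=m(H_{c,h},h,t)$ such that any $(c,h)$-pinched graph containing a $(\le h+1)$-subdivision of $K_{m_0}$ must contain $K_t$ or $K_{t,t}$. Assuming outcome~(a) of Theorem~\ref{thm:block_to_constellation} fails, we may therefore assume $G$ contains no $(\le h+1)$-subdivision of $K_{m_0}$.

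\emph{Step 2 (patches from the block).} Choose $\Omega$ large relative to all constants introduced below (in particular $\Omega\gg m_0$). For each $v\in B$, let $U_v$ consist of those $u\in B\setminus\{v\}$ for which $\mca{P}_{\{v,u\}}$ contains some path of length at least $h+1$; for each $u\in U_v$ pick such a path and delete $v$ to produce a subpath with $u\in B$ as one endpoint and the unique $v$-neighbor at the other. Strongness of the block ensures that these subpaths are pairwise disjoint, so $\mf{p}_v=(\{v\},\mca{L}_{\mf{p}_v})$ is an $(h,|U_v|)$-patch for $X_v=B\setminus\{v\}$. If $|U_v|$ were small for most $v\in B$, a pigeonhole argument would produce a sub-block $B_0\subseteq B$ of size $m_0$ all of whose pairs are linked exclusively by paths of length $\le h+1$; picking one such path per pair assembles, via strongness, a $(\le h+1)$-subdivision of $K_{m_0}$ in $G$, contradicting Step~1. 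Hence $|U_v|$ is large for every $v$ in a large $B^\star\subseteq B$, and after further pruning (again using strongness) we may assume that the patches $\{\mf{p}_v:v\in B^\star\}$ are pairwise disentangled.

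\emph{Step 3 (Lemma~\ref{lem:digraph} vertex by vertex).} Apply Lemma~\ref{lem:digraph} to each $\mf{p}_v$ with parameters tuned so that outcome~(a) produces a plain $(s,l)$-constellation directly, and outcome~(b) is ruled out by Step~1. A pigeonhole restriction to a large $B'\subseteq B^\star$ then leaves one of the remaining outcomes holding uniformly: either (c) a plain $(2(h+1),r)$-match $\mca{M}_v\subseteq V(\mf{p}_v)$ for $X_v$, or (d) a plain $(h,r')$-patch $\mf{q}_v\subseteq V(\mf{p}_v)$.

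\emph{Step 4 (gluing plain patches).} In case~(d), the $\mf{q}_v$'s form a large family of pairwise disentangled plain $(1,r')$-bundles. Feed them to Lemma~\ref{lem:bundle}: outcome~(a) yields $K_t$ or $K_{t,t}$; outcomes~(b) or~(c) produce a large sub-family of bundles sharing an $s$-element $S$-side, each of whose vertices is adjacent to every path in every retained bundle. Apply Lemma~\ref{lem:polypathvspolypath} to the polypaths of these bundles, yielding a further sub-family in which any two cross-bundle polypaths are either pairwise anticomplete or fully edge-adjacent. The anticomplete alternative, combined with plainness within each patch, delivers the desired plain $(s,l)$-constellation. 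In the edge-adjacent alternative, picking a path from each of two patches together with the edges between them and the patch centres produces an induced cycle of length $\ge h+2$ through a single vertex; repeating across $c$ different cross-pairs of disentangled bundles gives $c$ such cycles pairwise disjoint and anticomplete, contradicting pinching.

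\emph{Step 5 (handling plain matches).} In case~(c), a double pigeonhole argument over $v\in B'$ and the endpoints in $B$ of match paths produces a fixed $w\in B$ together with $c$ match paths $M_1,\dots,M_c$ sharing $w$ as one endpoint and with distinct other endpoints $b_1,\dots,b_c\in B$. Additional applications of Lemma~\ref{lem:polypathvspolypath} across matches from distinct $v$'s enforce pairwise anticompleteness of the $M_i$'s (the dense alternative again producing $c$ disjoint anticomplete cycles through a common vertex, a contradiction). For each $i$, the collection $\mca{P}_{\{w,b_i\}}$ has $\Omega$ pairwise internally disjoint members; since $\Omega$ dwarfs the total size of $M_1\cup\cdots\cup M_c$, a counting argument selects $P_i\in\mca{P}_{\{w,b_i\}}$ anticomplete to $\bigcup_j M_j$ outside $\{w,b_i\}$ and anticomplete across different $i$'s. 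Each $C_i=M_i\cup P_i$ is then an induced cycle of length $\ge h+2$ through $w$, and the $C_i$'s are pairwise disjoint and anticomplete outside $w$, once more contradicting the $(c,h)$-pinched condition.

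\emph{Main obstacle.} The crux is Step~4, where the non-plain constellation from Lemma~\ref{lem:bundle} must be upgraded to a plain one via Lemma~\ref{lem:polypathvspolypath}, and the dense alternative must genuinely furnish $c$ disjoint anticomplete induced cycles of length $\ge h+2$ violating pinching. Tuning the cascade of parameters $\Omega\gg m_0\gg\beta\gg\varphi\gg\eta\gg\cdots$ so that each Ramsey-type lemma applies with compatible constants is also a substantial bookkeeping effort.
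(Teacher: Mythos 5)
You have proved the wrong statement. The statement you were asked to prove is Lemma~\ref{lem:digraph}, a standalone result about a single $(d,\eta)$-patch $\mf{p}$ for a set $X$ in an arbitrary graph $G$: it asserts that, with $\eta$ large enough, one of four outcomes must hold (a plain constellation, a short subdivision of $K_m$, a plain match for $X$ inside $\mf{p}$, or a plain patch for $X$ inside $\mf{p}$). There is no block, no $(c,h)$-pinched hypothesis, no $t$-cleanness, and no appeal to Theorem~\ref{dvorak} or Theorem~\ref{noblocksmalltw_wall} in that statement; it is a purely Ramsey-type structure lemma about the interactions between the $\eta$ paths of a single patch. Your proposal instead sketches a proof of Theorem~\ref{thm:block_to_constellation}, which is the theorem in this paper that \emph{invokes} Lemma~\ref{lem:digraph} as a black box. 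This is immediately visible: your Step~1 references ``outcome~(a) of Theorem~\ref{thm:block_to_constellation},'' your input object is a strong $\Omega$-block $(B,\mca{P})$, and your Step~3 explicitly ``applies Lemma~\ref{lem:digraph}'' — so what you wrote cannot be a proof of Lemma~\ref{lem:digraph} without being circular.

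There is a further wrinkle worth noting: Lemma~\ref{lem:digraph} is not proved in this paper at all — it is imported verbatim from \cite{twix} (Lemma~7.2 there). A correct blind attempt at it would need to analyze the structure internal to $\mf{p}$: roughly, one examines how the $\eta$ paths of $\mca{L}_\mf{p}$ meet one another when one path is extended toward its $X$-end, encoding these first-contacts as arcs of an auxiliary digraph, and then a Ramsey-type case analysis on that digraph yields either a clique (giving the short $K_m$-subdivision), an independent sub-patch (giving the plain patch), a set of pairwise-meeting paths whose first contacts align to form a constellation, or a set of paths whose $X$-to-$X$ concatenations give the match. None of that appears in your writeup. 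If you intended to prove Theorem~\ref{thm:block_to_constellation} instead, your sketch is broadly in the right spirit but still diverges from the paper's proof in the patch construction (the paper fixes a single apex $x$ from a stable set of the auxiliary graph $J$ and takes $X_i$ to be the neighbors of $x$ along $\mca{P}_i$, rather than your per-vertex $X_v = B\setminus\{v\}$ patches, whose interiors are not obviously disjoint from $X_v$) and is missing the final rigid/non-rigid path analysis and the application of Lemma~\ref{lem:pinchedalignment} that closes the argument. But the primary issue is that the target statement was Lemma~\ref{lem:digraph}, and you did not address it.
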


We can now prove the main result of this section:

\begin{proof}[Proof of Theorem~\ref{thm:block_to_constellation}]
 Let $H$ be the unique graph (up to isomorphism) consisting of $c$ induced cycles of length $h+2$, all sharing a common vertex and otherwise pairwise disjoint and anticomplete. Let $m=m(H,h-1,t)$ be as in Theorem~\ref{dvorak} (note that $m$ only depends on $c,h,t$).

 Let $\gamma=(c(h+2))^{2chl-1}l(h+2chl-1)$; then $\gamma>1$ (in fact, it holds that $\gamma\geq 6$). Let
 $\varphi=\varphi(c,s(\gamma+2l-2)^l)$
 be as in Lemma~\ref{lem:polypathvspolypath}. Let
$$\beta=\beta(1,\varphi,l,c,1,s,t)$$
be as in Lemma~\ref{lem:bundle}, and let 
$$\eta=\eta(h-1,l,m,c,2(l-1)+\varphi,s)$$
be as in Lemma~\ref{lem:digraph}. We prove that
$$\Omega=\Omega(c,h,l,s,t)=\max\{m^{\beta+1},\eta\}$$
satisfies Theorem~\ref{thm:block_to_constellation}. 

Let $G$ be a $(c,h)$-pinched graph such that there is a strong $\Omega$-block in $G$. Then $G$ does not contain a subdivision of $H$. Suppose for a contradiction that $G$ contains neither $K_t$ nor $K_{t,t}$, and there is no plain $(s,l)$-constellation in $G$. By Theorem~\ref{dvorak} and the choice of $m$, $G$ contains no subgraph isomorphic to a $(\leq h-1)$-subdivision of $K_m$. It is convenient to sum up all this in one statement:

\sta{\label{st:blockcontradiction} The following hold.
\begin{itemize}
    \item $G$ does not contain $K_t$ or $K_{t,t}$.
    \item There is no plain $(s,l)$-constellation in $G$.
     \item $G$ does not contain a subgraph isomorphic to a $(\leq h-1)$-subdivision of $K_m$.
\end{itemize}}

Let $B$ be a strong $\Omega$-block in $G$ and for every $2$-subset $\{x,y\}$ of $B$, let $\mca{P}_{\{x,y\}}$ be the corresponding set of $\Omega$ paths in $G$ from $x$ to $y$. Let $J$ be a graph with vertex set $B$ such that two distinct vertices $x,y\in B$ are adjacent in $J$ if and only if there exists $P_{\{x,y\}}\in \mca{P}_{\{x,y\}}$ of length at most $h$. Since $|V(J)|=|B|\geq m^{\beta+1}$, it follows from Theorem~\ref{classicalramsey}  that $J$ contains either clique $C$ on $m$ vertices, or a stable set on $\beta+1$ vertices. In the former case, the union of the paths $P_{\{x,y\}}$ for all distinct $x,y\in C$ forms a subgraph of $G$ isomorphic to a $(\leq h-1)$-subdivision of $K_m$, which violates the third bullet in \eqref{st:blockcontradiction}. Therefore, $J$ contains a stable set on $\beta+1$ vertices. This, along with the choice of $\Omega$, implies that one may choose $\beta+1$ distinct vertices $x,y_1,\ldots, y_{\beta}\in V(G)$ as well as, for each $i\in \poi_{\beta}$, a collection $\mca{P}_i$ of $\eta$ pairwise internally disjoint paths in $G$ from $x$ to $y_i$, such that:
\begin{itemize}
    \item for every $i\in \poi_{\beta}$, every path $P\in \mca{P}_i$ has length at least $h+1\geq 2$; and
    \item $V(\mca{P}_1)\setminus \{x\},\ldots, V(\mca{P}_{\beta})\setminus \{x\}$ are pairwise disjoint in $G$.
\end{itemize}

For each $i\in \poi_{\beta}$, let $\mca{L}_i=\mca{P}^*_i$, and for every $L\in \mca{L}_i$, let $x_L$ and $y_L$ be the (unique) neighbors of $x$ and $y_i$ in $L$, respectively. Thus, we have $\partial L=\{x_L,y_L\}$ (note that $x_L,y_L$ might be the same, but they are distinct from both $x$ and $y_i$). Let $X_i=\{x_L:L\in \mca{L}_i\}$. It follows that $\mca{L}_i$ is an $\eta$-polypath in $G$, and
\begin{itemize}
    \item $y_i\in V(G)\setminus V(\mca{L}_i)$;
    \item every path $L\in \mca{L}_i$ has length at least $h-1$; and
    \item $L\cap X_i=\{x_L\}$ and $N_L(y_i)=\{y_L\}$ for every $L\in \mca{L}_i$.
\end{itemize}
Therefore, by \ref{P1}, \ref{P2} and \ref{P3}, for every $i\in \poi_{\beta}$, the $(1,\eta)$-bundle $\mf{p}_i=(\{y_i\},\mca{L}_i)$ is an $(h-1,\eta)$-patch for $X_i$ in $G$. In addition, we show that:

\sta{\label{st:applypatchlemme} For every $i\in \poi_{\beta}$, there is a plain $(h-1,2(l-1)+\varphi)$-patch $\mf{q}_i$ for $X_i$ in $G$ with $V(\mf{q}_i) \subseteq V(\mf{p}_i)\subseteq V(G)\setminus \{x\}$.}

By the choice of $\eta$, we can apply Lemma~\ref{lem:digraph} to $X_i$ and $\mf{p}_i$. Note that Lemma~\ref{lem:digraph}\ref{lem:digraph_a} and Lemma~\ref{lem:digraph}\ref{lem:digraph_b} violate the second and the third bullets of \eqref{st:blockcontradiction}, respectively. Assume that Lemma~\ref{lem:digraph}\ref{lem:digraph_c} holds. Then there is a plain $(2h,c)$-match $\mca{M}_i$ for $X_i$ in $G$ with $ V(\mca{M}_i)\subseteq V(\mf{p}_i)\subseteq V(G)\setminus \{y_i\}$. In particular, for every $M\in \mca{M}_i$, $C_M=M\cup \{x\}$ is a cycle of length at least $2h+2$ in $G$. But now $\{C_M:M\in \mca{M}_i\}$ is a collection of $c$ cycles of length at least $2h+2$ in $G$ where $\bigcap_{M\in \mca{M}_i}C_M=\{x\}$ and the sets $\{C_M\setminus \{x\}:M\in \mca{M}\}$ are pairwise disjoint and anticomplete in $G$, violating the assumption that $G$ is $(c,h)$-pinched. So Lemma~\ref{lem:digraph}\ref{lem:digraph_d} holds. This proves \eqref{st:applypatchlemme}.

\medskip

For each $i\in \poi_{\beta}$, let $\mf{q}_i$ be as in \eqref{st:applypatchlemme} and let $S_{\mf{q}_i}=\{z_i\}$ (note that $z_i$ may or may not belong to $X_i\subseteq N_G(x)$). Now, $\mf{B}=\{\mf{q}_1,\ldots, \mf{q}_{\beta}\}$ is a collection of $\beta$ pairwise disentangled plain $(1,2(l-1)+\varphi)$-bundles in $G$. Given the choice of $\beta$, we can apply Lemma~\ref{lem:bundle} to $\mf{B}$. Note that Lemma~\ref{lem:bundle}\ref{lem:bundle_a} directly violates  the first bullet of \eqref{st:blockcontradiction}. Also, if Lemma~\ref{lem:bundle}\ref{lem:bundle_b} holds, then there is an $(s, l)$-constellation $\mf{c}$ in $G$ with $\mca{L}_{\mf{c}}\subseteq \mca L_{\mf{q}_i}$ for some $i \in \poi_{\beta}$; in particular, $\mf{c}$ is plain. But this violates the second bullet of \eqref{st:blockcontradiction}.  It follows that  Lemma~\ref{lem:bundle}\ref{lem:bundle_c} holds, that is, there exists $I\subseteq \poi_{\beta}$ with $|I|=c$ as well as $\mca{F}_{i}\subseteq \mca{L}_{\mf{q}_i}$ with $|\mca{F}_{i}|=\varphi$ for each $i\in I$, such that for all distinct $i,i'\in I$, $z_i$ is anticomplete to $V(\mca{F}_{i'})\cup \{z_{i'}\}$ in $G$.  We further deduce that:

\sta{\label{st:polyvspoly} There are distinct $i,i'\in I$ for which there exist $s(\gamma+2l-2)^l$-polypaths $\mca{G}_{i}\subseteq \mca{F}_i$ and $\mca{G}_{i'}\subseteq \mca{F}_{i'}$ such that for every $L\in \mca{G}_i$ and every $L'\in \mca{G}_{i'}$, there is an edge in $G$ with an end in $L$ and an end in $L'$.}

Suppose not. Then by the choice of $\varphi$, we may apply Lemma~\ref{lem:polypathvspolypath} to the $\varphi$-polypaths $\{\mca{F}_i:i\in I\}$ and deduce that for every $i\in I$,  there exists a $s(\gamma+2l-2)^l$-polypath $\mca{G}_{i}\subseteq \mca{F}_i$, such that for all distinct $i,i'\in \poi_a$, the sets $V(\mca{G}_i)$ and $V(\mca{G}_{i'})$ are anticomplete in $G$. Since $s(\gamma+2l-2)^l\geq \gamma>1$, it follows that for each $i\in I$, there is a cycle $C_i$ in $G[V(\mca{G}_i)\cup \{x,z_i\}]$ of length at least $h+2$ where $x\in C_i$. Also, the sets $\{V(\mca{G}_i)\cup \{z_i\}:i\in I\}$ are pairwise disjoint and anticomplete in $G$, which in turn implies that the sets $\{C_i\setminus \{x\}:i\in I\}$ are pairwise disjoint and anticomplete in $G$. This yields a contradiction with the assumption that $G$ is $(c,h)$-pinched, and so proves \eqref{st:polyvspoly}.

\medskip

Henceforth, let $i,i'\in I$ and $\mca{G}_i,\mca{G}_{i'}$ be as in \eqref{st:polyvspoly}. Since $|\mca{G}_{i}|=|\mca{G}_{i'}|= s(\gamma+2l-2)^l\geq \gamma+2l-2$, we may choose  $\mca{G}'\subseteq \mca{G}_{i'}$ with $|\mca{G}'|=\gamma+2l-2$. Write $\mca{G}= \mca{G}_{i}$. It follows that both $\mca{G}$ and $\mca{G}'$ are plain and disentangled polypaths in $G$. For every path $L\in \mca{G}$, let us say $L$ is \textit{rigid} if there exists a vertex $x_L\in L$ as well as $\mca{G}'_L\subseteq \mca{G}'$ with $|\mca{G}'_L|=l$ such that $x_L$ has a neighbor in every path in $\mca{G}'_L$. 
We claim that:

\sta{\label{st:fewrigidpaths} 
        Not all paths in  $\mca{G}$ are rigid, that is, there is a path $L_0\in \mca{G}$ such that no vertex in $L_0$ has neighbors in more than $l-1$ paths in $\mca{G}'$.}

Suppose not. For every $L\in \mca{G}$, let $x_L\in L$ and $\mca{G}'_L\subseteq \mca{G}'$ with $|\mca{G}'_L|=l$ be as in the definition of a rigid path. Since $|\mca{G}|=s(\gamma+2l-2)^l$ and $|\mca{G}'|=\gamma+2l-2$, it follows that there exists $\mca{S}\subseteq \mca{G}$ with $|\mca{S}|=s$ and $\mca{L}\subseteq \mca{G}'$ with $|\mca{L}|=l$ such that for every $L\in \mca{S}$, we have $\mca{G}'_L=\mca{L}$. Let $S=\{x_L:L\in \mca{S}\}$. Then every vertex in $S$ has a neighbor in every path in $\mca{L}$. But now since $\mca{G}$ and $\mca{G}'$ are plain and disentangled, it follows that $(S,\mca{L})$ is a plain $(s,l)$-constellation in $G$, a contradiction with the second bullet of \eqref{st:blockcontradiction}. This proves \eqref{st:fewrigidpaths}.
\medskip

By \eqref{st:fewrigidpaths}, there exists a path $L_0\in \mca{G}$ that is not rigid. Let $\mca{T}$ be the set of all paths $L'\in \mca{G}'$ for which some vertex in $\partial L_0$ has a neighbor in $L'$ in $G$. Then by \eqref{st:fewrigidpaths}, we have $|\mca{T}|\leq 2l-2$. Consequently, there are $\gamma$ distinct paths $L'_1, \ldots, L'_{\gamma}\in \mca{G}'$ such that for every $i\in \poi_{\gamma}$, the ends of $L_0$ are anticomplete to $L'_i$ in $G$. For each $i\in \poi_{\gamma}$, let $x_i$ be the end of $L'_i$ which is adjacent to $x$. By \eqref{st:polyvspoly}, traversing $L'_i$ starting at $x_i$, we may choose $x'_i\in L'_i$ to be the first vertex in $L'_i$ with a neighbor in $L_0^*$. Let $S_0=\{x'_i:i\in \poi_{\gamma}\}$. It follows that every vertex in $S_0$ has a neighbor in $L_0$ while $S_0$ is anticomplete to the ends of $L_0$, and so $(S_0,L_0)$ is a $(\gamma,1)$-constellation in $G$. Also, by \eqref{st:fewrigidpaths}, $(S_0,L_0)$ is $(l-1)$-meager. This calls for an application of Lemma~\ref{lem:pinchedalignment} to $(S_0,L_0)$, which implies that there exists a $c(h+2)$-alignment $(S,L,\pi)$ in $G$ with $S\subseteq S_0$ and $L\subseteq L_0$. In fact, since $S_0$ is anticomplete to $\partial L_0$, we may assume that $L\subseteq L_0^*$.

Let $u$ be the end of $L$ for which $(S,L,\pi)$ satisfies \ref{AL}. For every $i\in \poi_c$, choose $\varepsilon_i,\delta_i,\theta_i\in \poi_{\gamma}$ such that
$$x'_{\varepsilon_i}=\pi(i(h+2)-h-1);$$
$$x'_{\delta_i}=\pi(i(h+2)-1);$$
$$x'_{\theta_i}=\pi(i(h+2));$$
Traversing $L$ starting at $u$, let $u_i$ be the last neighbor of $x'_{\varepsilon_i}$ in $L$, let $v_i$ be the first neighbor of  $x'_{\delta_i}$ in $L$, and let $w_i$ be the first neighbor of  $x'_{\theta_i}$ in $L$. It follows that $u,u_1,v_1,w_1\ldots, u_c,v_c,w_c$ are pairwise distinct, appearing on $L$ in this order. Also, for every $i\in \poi_c$, $P_i=u_i\dd L\dd v_i$ is a path of length at least $h$ in $G$, and $P_1,\ldots, P_c$ are contained in distinct components of $L\setminus \{w_i:i\in \poi_c\}$. Thus, $P_1,\ldots, P_c$ are pairwise disjoint and anticomplete in $G$. Now, for every $i\in \poi_c$, consider the cycle
$$C_i=x\dd x_{\varepsilon_i}\dd L'_{\varepsilon_i}\dd x'_{\varepsilon_i}\dd u_i\dd P_i\dd v_i\dd x'_{\delta_i}\dd L'_{\delta_i}\dd x_{\delta_i}\dd x.$$
Then $C_1,\ldots, C_c$ are $c$ cycles in $G$ each of length at least $h+4$, all going through $x$ and otherwise pairwise disjoint and anticomplete. This contradicts the assumption that $G$ is $(c,h)$-pinched, hence completing the proof of Theorem~\ref{thm:block_to_constellation}.
\end{proof}
 
\section{The end}\label{sec:end}
We conclude the paper with the proof of our main result, which we restate:
\setcounter{section}{3}
\setcounter{theorem}{1}
\begin{theorem}
    For all $c,h,s,t\in \poi$, there is a constant $\tau=\tau(c,h,s,t)\in \poi$ such that for every $t$-clean $(c,h)$-pinched graph $G$ of treewidth more than $\tau$, there is an $(s,h)$-array in $G$.
\end{theorem}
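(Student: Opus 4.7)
The plan is simply to chain together the three main technical results developed earlier in the excerpt, applied in reverse order of their proofs. The heavy lifting has already been done in Sections~\ref{sec:blocks}, \ref{sec:deal}, and \ref{sec:obtain}, so the final theorem should drop out essentially as a composition.

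Concretely, given $c,h,s,t\in \poi$, I would define the constants from the inside out. First, let $\sigma_0=\sigma(c,h,s,t)$ and $\lambda_0=\lambda(c,h,s,t)$ be the constants supplied by Theorem~\ref{thm:dealwithconst}, which guarantee that a plain $(\sigma_0,\lambda_0)$-constellation in a $t$-clean $(c,h)$-pinched graph (with no $K_t$ or $K_{t,t}$) yields an $(s,h)$-array. Next, let $\Omega_0=\Omega(c,h,\lambda_0,\sigma_0,t)$ be the constant supplied by Theorem~\ref{thm:block_to_constellation}, which converts a strong $\Omega_0$-block in a $(c,h)$-pinched graph into either a copy of $K_t$, a copy of $K_{t,t}$, or a plain $(\sigma_0,\lambda_0)$-constellation. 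Finally, set $\tau=\tau(c,h,s,t)=\xi(\Omega_0,t)$, where $\xi$ is as in Theorem~\ref{noblocksmalltw_wall}.

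Now let $G$ be a $t$-clean $(c,h)$-pinched graph with $\tw(G)>\tau$. Since $K_t$ and $K_{t,t}$ are themselves $t$-basic obstructions, the $t$-cleanness of $G$ immediately rules out both; this observation is what makes the chaining consistent. Applying Theorem~\ref{noblocksmalltw_wall} with parameters $\Omega_0$ and $t$ produces a strong $\Omega_0$-block in $G$. Feeding this block into Theorem~\ref{thm:block_to_constellation}, the two ``bad'' alternatives (containment of $K_t$ or $K_{t,t}$) are excluded by $t$-cleanness, so we obtain a plain $(\sigma_0,\lambda_0)$-constellation in $G$. Finally, applying Theorem~\ref{thm:dealwithconst} to this constellation produces the desired $(s,h)$-array, completing the proof.

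In terms of difficulty, there is essentially no obstacle left at this stage: the entire proof is a bookkeeping exercise that selects the constants in the correct nested order and verifies that each ``bad outcome'' of the intermediate theorems is precluded by $t$-cleanness. The only thing one must be careful about is the order of the arguments passed to $\Omega$, namely that the block size must be chosen large enough to yield a constellation of the specific shape $(\sigma_0,\lambda_0)$ required by Theorem~\ref{thm:dealwithconst}, rather than in terms of the original parameters $(s,h)$.
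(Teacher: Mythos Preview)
Your proposal is correct and essentially identical to the paper's own proof: it chains Theorem~\ref{noblocksmalltw_wall}, Theorem~\ref{thm:block_to_constellation}, and Theorem~\ref{thm:dealwithconst} with the constants nested in exactly the same order $\tau=\xi(\Omega(c,h,\lambda,\sigma,t),t)$. The only cosmetic difference is that you explicitly spell out why $t$-cleanness excludes $K_t$ and $K_{t,t}$, whereas the paper simply asserts this.
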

\begin{proof}
    Let $\sigma=\sigma(c,h,s,t)$ and     $\lambda=\lambda(c,h,s,t)$ be as in Theorem~\ref{thm:dealwithconst}. Let $\Omega=\Omega(c,h,\lambda,\sigma,t)$ be as in Theorem~\ref{thm:block_to_constellation}. We define  
$\tau(c,h,s,t)=\xi(\Omega,t)$, where $\xi(\cdot,\cdot)$ comes from Theorem~\ref{noblocksmalltw_wall}.  Let $G$ be a $t$-clean $(c,h)$-pinched graph of treewidth more than $\tau$. By Theorem~\ref{noblocksmalltw_wall}, $G$ contains a strong $\Omega$-block. Therefore, since $G$ does not contain $K_t$ and $K_{t,t}$, it follows from Theorem~\ref{thm:block_to_constellation} that there exists a plain $(\sigma,\lambda)$-constellation in $G$. But now by Theorem~\ref{thm:dealwithconst}, there is an $(s,h)$-array in $G$, as desired.
\end{proof}

\setcounter{section}{6}

\section{Acknowledgments}
Our thanks to the anonymous referees for suggesting several improvements.

\bibliographystyle{abbrv}
	\bibliography{ref}

\end{document}